\theoremstyle{plain}
\newtheorem{thm}{Theorem}[section]
\newtheorem{lem}[thm]{Lemma}
\newtheorem{prop}[thm]{Proposition}
\theoremstyle{definition}
\newtheorem{defn}[thm]{Definition}
\newtheorem{ex}[thm]{Example}
\newtheorem{question}{Question}
\theoremstyle{remark}
\newtheorem{rem}{Remark}
\newtheorem{nota}{Notation}
\newcommand{\rar}{\rightarrow}
\newcommand{\surjects}{\twoheadrightarrow}
\def\C{{\mathcal C}}
\def\implies{\ifmmode\Rightarrow \else
        \unskip${}\Rightarrow{}$\ignorespaces\fi}
\def\fm{{\mathfrak m}}
\def\VaVa{{\mathcal V}\kern-5pt {\mathcal V}}
\def\lcm{\mathrm{lcm}}
\def\indeg{\mathrm{indeg}}
\begin{document}

\title[The Valabrega-Valla module of monomial ideals]{the Valabrega-Valla module of  monomial ideals}
\author[A. Nasrollah Nejad and  A. A. Yazdan Pour]{abbas Nasrollah nejad and  {ali akbar} {yazdan pour}}
\address{department of mathematics\\ institute for advanced studies in basic sciences (IASBS)\\ p.o.box 45195-1159 \\ zanjan, iran}
\email{abbasnn@iasbs.ac.ir, yazdan@iasbs.ac.ir}

\subjclass[2010]{primary 13A30, 13F55, 05E40; secondary 14E15, 14C17}
\keywords{Rees algebra, Aluffi algebra,  Valabrega-Valla module,  monomial ideal, Jacobian ideal}
%\thanks{* Corresponding author}

\begin{abstract}
 In this paper, we focus on the initial degree and the vanishing of the Valabrega-Valla module of a pair of monomials ideals $J\subseteq I$ in a polynomials ring over a field $\mathbb{K}$. We prove that the initial degree of this module is bounded above by the maximum degree of a minimal  generators of $J$. For edge ideals of graphs, a complete characterization of the vanishing of the Valabrega-Valla module is given. For higher degree ideals, we find classes which the Valabrega-Valla module vanishes. For the case that $J$ is the facet ideal of a clutter $\mathcal{C}$ and  $I$ is the defining ideal of singular subscheme of $J$, the non-vanishing of this module is investigated in terms of the combinatorics of $\mathcal{C}$.   Finally, we describe the defining ideal of the Rees algebra of $I/J$      
provided that the Valabrega-Valla module is zero.
\end{abstract}
\maketitle

\section*{introduction}
Let $R$ be a commutative Noetherian ring and $J\subseteq I$ ideals in $R$. The \textit{Valabrega-Valla module} of $I$ with respect to $J$ is the graded module 
\begin{equation*}\label{vava}
\VaVa_{J\subseteq I}:=\bigoplus_{t\geq 1} \frac{J\cap
	I^t}{JI^{t-1}}.
\end{equation*}
The Valabrega-Valla module  appeared in~\cite{VaVa}, where its vanishing gives a criterion for Cohen-Macaulayness of the associated graded ring of  the $\fm$-primary ideal $I$  provided that $J$ is a minimal reduction of  $I$  in the Cohen-Macaulay local ring $(R,\fm)$. Later the first author and A. Simis proved that, if $I$ has a regular element modulo $J$, the Valabrega-Valla module is the torsion of the Aluffi algebra~\cite[Proposition  2.5]{AA}. The latter algebra is the algebraic version of  characteristic cycles in intersection theory in the hypersurface case, hence it is interesting for geometric purposes. Dealing directly with the Valabrega-Valla module makes the structure of the Aluffi algebra itself sort of invisible. By the structure of the Aluffi algebra, $\VaVa_{J\subseteq I}=\{0\}$  if and only if  the Aluffi algebra is isomorphic with the Rees algebra of $I/J$. 

In geometric setting, the vanishing of the Valabrega-Valla module is crucial in the intersection theory of regular and linear embedding. More precisely, let $ X\stackrel{i}\hookrightarrow Y \stackrel{j}\hookrightarrow Z$
be closed embeddings of schemes with $\mathcal{J}\subseteq \mathcal{I}\subseteq \mathcal{O}_Z$, the ideal sheaves of $Y$ and $X$
in $Z$, respectively. The embedding $i$ is said to be $\mathrm{linear}$  if every (not necessarily closed)
point $x\in X$ admits  an affine neighborhood $U$ such that the ideal $\mathcal{I}_U/\mathcal{J}_U$ is of linear type in $\mathcal{O}_{Z,U}/\mathcal{J}_U$. A $\mathrm{ regular}$ embedding is defined similarly in terms of regular sequences.
The result of \cite[Theorem 9.2]{Fulton} can be translated to the fact that if $i$ and $j$ are both regular embeddings, then
for all sufficiently large $t$, every point $x\in X$ admits  an affine neighborhood $U$ such that
$\VaVa_{\mathcal{J}_U\subseteq \mathcal{I}_U }=\{0\}$.
More generally, it is shown in \cite[Theorem 1]{Keel} that the same result holds as long as $i$ is a linear
 embedding and $j$ is a regular embedding. 
Thus, under such strong hypothesis, if $Z$ is a regular scheme, the Valabrega-Valla module of $X$ in $Y$, $\VaVa_{X\hookrightarrow Y}=\bigoplus_{t\geq 1} \mathcal{J}\cap \mathcal{I}^t/\mathcal{J}\mathcal{I}^{t-1}$  is zero locally on affine pieces. However, even when $Y$ is a hypersurface embedded in projective space $Z=\mathbb{P}^n$, and $X$ is its singular subscheme, the embedding $X\hookrightarrow Y$ may fail to be linear if $Y$ is non-smooth. This was the main motivation in \cite{AA}, where a detailed analysis was carried assuming an affine situation,
in terms of the relation type number of $I/J$ over $R/J$ and the Artin-Rees number of $J$ relative to $I$.

The vanishing of $\VaVa_{J\subseteq I}$ has close relation with the theory of $I$-standard base (in the sense of Hironaka) which is an essential problem in the resolution of singularities. Indeed, 
 $\VaVa_{J\subseteq I}=\{0\}$ if and only if the tangent  cone $\mathrm{Spec}(\mathrm{gr}_{I/J}(R/J))$ of $Y$ in $X$ is isomorphic with $\mathrm{Spec}(\mathrm{gr}_{I}(R)/J^*)$, where $J^*$ is the form ideal generated by an $I$-standard base of order one in $\mathrm{gr}_{I}(R)$~\cite[Theorem 1.1]{VaVa}.  

The necessary and sufficient conditions for the vanishing of $\VaVa_{J\subseteq I}$ is given  in terms of the first syzygy module of the form ideal $J^*$ in the associated graded ring of $I$~\cite[Theorem 1.2]{AZ1}.  For the case that $J$ is   linear determinantal ideals (rational normal scrolls and alike) or ideal of projective points and  $I$ stands for the Jacobian ideal of $J$ the vanishing problem of $\VaVa_{J\subseteq I}$ is studied in~\cite{AR,AZ}.   

In this paper, we focus on the problem of  the vanishing of the Valabrega-Valla module for a pair of monomial ideals. In the case that $J$ is a edge ideal of a simple graph and $I$ is the Jacobian ideal of $J$, which is also a monomial ideal, the vanishing of $\VaVa_{J \subseteq I}$ characterized combinatorially~\cite[Theorem 3.3]{AR}. The outline of the paper is as follows.   

In section~\ref{c1}, we describe the basic definitions and preliminaries which are used in the sequel, including the Aluffi and Rees algebras, the Valabrega-Valla module,  the Artin-Rees number and the relation type number. As a basic fact, we realize that the vanishing of $\VaVa_{J\subseteq I}=\oplus_{t\geq 1}J\cap I^t/JI^{t-1}$ reduces to the equality 
$J\cap I^t=JI^{t-1}$ for finitely many $t$. 

Sections~\ref{c2} and~\ref{c3} are devoted to be the combinatorial core of this work. One of the main theorem in section~\ref{c2} states that  if $J\subseteq I$ are monomial ideals and $\VaVa_{J\subseteq I}\neq \{0\}$, then the initial degree of $\VaVa_{J\subseteq I}$ is bounded above by the maximum degree of a minimal generators of $J$  (Theorem~\ref{vvmon}). Then for the case that $J\subseteq I$ are edge ideals of graphs, in Proposition~\ref{graph vv} we compute the $\indeg(\VaVa_{J\subseteq I})$  precisely in terms of the combinatorics of the associated graphs. In the last part of this section, we prove that if $\mathcal{C}$ is a complete  $d$-partite $d$-uniform clutter and $\mathcal{C}'$ is a subclutter of $\mathcal{C}$, then the corresponding Valabrega-Valla module of $I(\mathcal{C}')\subseteq I(\mathcal{C}
)$ is zero (Theorem~\ref{d-uni-d-part}).  

In section~\ref{c3}, we introduce  the Valabrega-Valla module of a single ideal $J$, which is by definition the Valabrega-Valla module of the pair $J\subseteq I$, where $I$ is the Jacobian ideal of $J$. For the facet ideal $J$ of a clutter $\mathcal{C}$, we find a non-zero component of the Valabrega-Valla module of $J$ in terms of the combinatorics of $\mathcal{C}$ (Theorem~\ref{ttorsion}). This result is a generalization of one direction of~\cite[Theorem 3.3]{AR}. 

In the last section, we give a presentation for  the Rees algebra of $I/J$ when $I$ is the edge ideal of a graph or the facet ideal of a complete $d$-partite $d$-uniform clutter and $J$ is an appropriate ideal in $I$ and $\VaVa_{J\subseteq I}=\{0\}$. We close the paper by posing some research problems related to this subject.

\section{The vanishing of the Valabrega-Valla module}\label{c1}
Let $R$ be a Noetherian ring and $J\subseteq I$  ideals of $R$. There is a natural surjective $R/J$-algebra homomorphism from the Aluffi algebra $\mathcal{A}_{R/J}(I/J)$ to the Rees algebra $\mathcal{R}_{R/J}(I/J)$
\begin{equation}\label{Aluf-Rees}
\mathcal{A}_{R/J}(I/J)\simeq\bigoplus_{t\geq 0} I^t/JI^{t-1}\surjects \mathcal{R}_{R/J}(I/J)\simeq\bigoplus_{t\geq 0} I^t/J\cap I^t.
\end{equation}

The kernel of this homomorphism is so called the \textit{Valabrega-Valla module} of $I$ with respect to $J$ and is denoted by $\VaVa_{J\subseteq I}$. Indeed,
\begin{equation*}\label{vava2}
\VaVa_{J\subseteq I}:=\bigoplus_{t\geq 1} \frac{J\cap
	I^t}{JI^{t-1}}.
\end{equation*} 

If  $I$ has a regular element modulo $J$, then $\VaVa_{J\subseteq I}$ is $R/J$-torsion of the Aluffi algebra. We will see later that in order to check $\VaVa_{J\subseteq I}=\{0\}$ (i.e., $J\cap I^t=JI^{t-1}$ for all $t\geq 1$), it is enough to show that $J\cap I^t=JI^{t-1}$ for finitely many $t$. 

Given ideals $J,I\subset R$ the \textit{Artin--Rees number} $\mathrm{AR}(J,I)$
of $J$ relative to $I$ is the integer
$$\min\{k\geq 0\colon \quad J\cap I^t=(J\cap I^k)I^{t-k},\;\forall\; t\geq k\}.
$$
Let $\mathfrak{a}$ be an ideal of a ring $A$. The Rees algebra of $\mathfrak{a}$ is defined by $\mathcal{R}_A(\mathfrak{a})=A[\mathfrak{a}t]\subset A[t]$. Let $a_1,\ldots,a_m$ be a minimal generating set for $\mathfrak{a}$. Consider the polynomial ring $A[\mathbf{T}]=R[T_1,\ldots,T_m]$. There is a natural surjective $R$-algebra homomorphism $\psi\colon R[\mathbf{T}]\rar \mathcal{R}_A(I)$ which sends $T_i$ to $a_it$. The kernel of $\psi$ is called the \textit{defining ideal} of the Rees algebra of $\mathfrak{a}$. The \textit{relation type number} $\mathrm{rt}(\mathfrak{a})$ of $\mathfrak{a}$ is the largest degree of any minimal system of homogeneous generators of the kernel $\psi$. Note that this notion is independent of the set of generators  of $\mathfrak{a}$.  

Assume that $I/J$ has regular elements over $R/J$. By  \cite[Corollary 2.6]{AA}, the module of Valabrega-Valla is the zeroth local cohomology of ${\mathcal A}_{R/J}(I/I)$ with respect to $I/J$. In particular, there exists an integer $d\geq 0$ such that $I^d(J\cap I^t)\subseteq JI^{t-1}$ for all $t\geq 1$.  One of the possible such $d$ is $\mathrm{AR}(J,I)-1$ and  $\VaVa_{J\subseteq I}=\{0\}$ if and only if $\mathrm{AR}(J,I)=1$~\cite[Proposition 2.15]{AA}. 
\begin{ex}
Let $J\subset R=\mathbb{K}[x_1,\ldots,x_n],\  (n\geq 3)$ denote the ideal generated by all squarefree monomials in degree $2$, i.e., $J=(x_ix_j \colon\  1\leq i<j\leq n)$. The ideal $J$ is the defining ideal of $n$ coordinate points in $\mathbb{P}^{n-1}$. The Jacobian matrix of $J$ is of the form 
\[
\Theta=\left[\begin{array}{c|ccccc}
x_2&x_1&0&0&\ldots&0\\
x_3&0&x_1&0&\ldots&0\\
\vdots&\vdots&\vdots&\vdots&\ldots&\vdots\\
x_n&0&0&0&\ldots&x_1\\
\hline
0&&&&&\\
\vdots&&&\Theta'&&\\
0&&&&&\\
\end{array}
\right]
,\]
where $\Theta'$ is the Jacobian matrix of the ideal $J'=(x_ix_j \colon \ 2\leq i<j\leq n)$.  
By induction on $n$, we show that the ideal of $(n-1)$-minors of $\Theta$ is the $(n-1)^{th}$-power of the irrelevant maximal ideal $\fm=(x_1,\ldots,x_n)$. For $n=3$, clearly $I_2(\Theta)=(x_1,x_2,x_3)^2$. By induction hypothesis, $I_{n-2}(\Theta')=\fm_1^{n-2}$, where $\fm_1=(x_2,\ldots,x_n)$. Thus $(x_2,\ldots,x_n)\fm_1^{n-2}\subset I_{n-1}(\Theta)$. Therefore by changing the role of $x_1$ by $x_i$ and using the argument as in Example~\ref{Jacobmax}(b) we may conclude that $I_{n-1}(\Theta)=\fm^{n-1}$. Then Example 2.19 in 
~\cite{AA} implies that $\mathrm{AR}(J,I)=1$ while the following discussion shows that the  relation type number of $I/J$ is $2$. 

Note that the Jacobian ideal $I=(J,\fm^{n-1})$ is generated minimally by $J$ and monomials $x_1^{n-1},\ldots,x_n^{n-1}$. Set $\mathfrak{a}=(x_1^{n-1},\ldots,x_n^{n-1})$.  
Let $G$ be a simple graph which consists of  a complete graph on vertex set $\{T_1,\ldots,T_n\}$ and each vertex $T_i$ has $(n-1)$ whiskers $x_1,\ldots, \hat{x}_{i},\ldots, x_n$. 
We claim that 
\[\mathcal{R}_{R/J}(I/J)\simeq \mathcal{R}_{\bar{R}}(\bar{\mathfrak{a}})\simeq \frac{R[T_1,\ldots, T_n]}{(J,I(G))},   \] 
where $I(G)$ is the edge ideal of the graph $G$. Clearly, $(J,I(G))$ is included in the defining ideal $\mathcal{J}$ of the Rees algebra of $\bar{\mathfrak{a}}$. Conversely,  let $F(T_1,\ldots,T_n)\in \mathcal{J}$  be a homogeneous polynomial of degree $r\geq 1$ and $u=x_1^{\alpha_1}\cdots x_n^{\alpha_n}T_1^{\beta_1}\cdots T_n^{\beta_n}$ be a monomial in the support of $F$. Since $F(x_1^{n-1},\ldots, x_n^{n-1})\in J$, it follows that $(x_1^{n})^{\alpha_1+\beta_1}\cdots (x_n^{n})^{\alpha_n+\beta_n}\in J$ and $\beta_k>0$ for some $k$. Hence there exist $i<j$ such that $x_ix_j\mid u(x_1^{n-1},\ldots,x_n^{n-1})$ and $\alpha_i+\beta_i>0,\, \alpha_j+\beta_j>0$. If $\beta_i,\beta_j>0$ or $\alpha_i,\beta_j>0$ or $\alpha_j,\beta_i>0$,  then clearly $u\in I(G)$. Otherwise, either $i\neq k$ or $j\neq k$ and $x_iT_k\mid u$ or $x_jT_k\mid u$. In both cases, we conclude that $u\in I(G)$. Thus $F\in I(G)$ and in particular $\mathrm{rt}(I/J)=2$. 
\end{ex}
The vanishing of the Valabrega-Valla module has close relation with the Artin-Rees number of $J$ relative to $I$ and the relation type number of $I/J\subseteq R/J$. We will show that these numbers have relation with the initial degree of $\VaVa_{J\subseteq I}$. 
%\begin{defn}\label{NVaVa}
%Let $J\subseteq I$ be ideals in the ring $R$ and $t\geq 1$ a positive integer. We say that $J\subseteq I$ is \textit{$\VaVa_{ \leq r}$-torsion-free}, if $J\cap I^n=JI^{n-1}$ for all $1\leq n\leq r$. 
%\end{defn}

For a given graded ring $A$ and a graded $A$-module $0\neq M=\oplus_{i\in \mathbb{N}} M_i$, the \textit{initial degree} of $M$ is defined by 
\[\mathrm{indeg}(M)=\min \{ i\colon \ M_i\neq 0\}.\]

If $J\cap I^n=JI^{n-1}$ for all $1\leq n\leq \ell$ where $\ell$ is the Artin-Rees number of $J$ relative to $I$, then $\VaVa_{J\subseteq I}=\{0\}$~ \cite[Lemma 2.16]{AA}. Thus if $\VaVa\neq \{0\}$, then 
\begin{equation}\label{indef-art-rtype}
\mathrm{indeg}(\VaVa_{J\subseteq I})\leq \mathrm{AR}(J,I)\leq \mathrm{rt}(I/J),
\end{equation}
where the last inequality comes from~\cite[Theorem 2]{FPV}.

Let $I$ be an ideal in the ring $R$. Recall that an ideal $J \subseteq I$ is called a \textit{reduction} of $I$, if $JI^n =I^{n+1}$, for sufficiently large $n$. For a reduction $J$ of $I$, let
\[{r}_J(I)= \min \{t \colon \quad JI^n=I^{n+1}, \text{ for all } n \geq t \} \]
be the \textit{reduction number} of $I$ relative to $J$. It is obvious from definition that $\indeg(\VaVa_{J\subseteq I})\leq r_J(I)$ provided that $\VaVa_{J\subseteq I}\neq \{0\}$. Therefore, in the case that $J\subseteq I$ is a reduction of $I$ and $\VaVa_{J\subseteq I}\neq \{0\}$, one has 
\[\indeg(\VaVa_{J\subseteq I})\leq \min\{\mathrm{AR}(J,I),\, {r}_J(I)  \} . \]

\begin{prop}\label{dpower} 
Let $J\subset R=\mathbb{K}[\mathbf{x}]$ be a homogeneous ideal and $r\geq 1$ be an integer such that $J\subseteq \fm^r$, where $\fm=(\mathbf{x})$. 
\begin{itemize}
\item[{\rm (a)}] If $\VaVa_{J\subseteq \fm^r}=\{0\}$, then $\mathrm{indeg}(J)=r$. 
\item[{\rm (b)}] If $J$ is generated by some forms of degree $r$, then $\VaVa_{J\subseteq \fm^r}=\{0\}$.
\end{itemize}
\end{prop}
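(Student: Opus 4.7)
The plan for part (a) is to argue by contrapositive: assuming $\indeg(J) = s > r$, I will produce a nonzero component of $\VaVa_{J \subseteq \fm^r}$. Set $t = \lceil s/r \rceil$, so that $t \geq 2$ (since $s > r$) and $r(t-1) < s \leq rt$. Pick any nonzero form $f \in J$ of degree $s$ (guaranteed by the definition of $\indeg$) and any monomial $m$ of degree $rt - s \geq 0$ (taking $m = 1$ when $rt = s$). Then $mf \in J$ is a nonzero form of degree exactly $rt$, so $mf \in J \cap \fm^{rt} = J \cap (\fm^r)^t$. On the other hand, every nonzero element of $J\fm^{r(t-1)}$ has degree at least $s + r(t-1) = rt + (s-r) > rt$, so $mf \notin J(\fm^r)^{t-1}$. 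This contradicts the vanishing of $\VaVa_{J \subseteq \fm^r}$.

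For part (b), the inclusion $J(\fm^r)^{t-1} \subseteq J \cap (\fm^r)^t$ is automatic from $J \subseteq \fm^r$. For the reverse, since $J$ and $\fm^{rt}$ are both homogeneous, it suffices to consider a single form $g \in J \cap \fm^{rt}$ of degree $d \geq rt$. Writing $g = \sum_i g_i f_i$ with $f_1,\ldots,f_m$ the given degree-$r$ generators of $J$, I may take each $g_i$ to be a form of degree $d - r \geq r(t-1)$ by extracting homogeneous components. Hence $g_i \in \fm^{d-r} \subseteq \fm^{r(t-1)}$, and so $g \in J\fm^{r(t-1)} = J(\fm^r)^{t-1}$, as required.

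The only subtlety in (a) is the range $r < s < 2r$: here the degree-$s$ forms of $J$ themselves do not lie in $(\fm^r)^2 = \fm^{2r}$, so one cannot simply take $f' = f$ and use $t = 2$. The multiplication by a monomial of positive degree $rt - s$ is precisely what pushes the form into the correct power of $\fm$, and choosing $t = \lceil s/r \rceil$ treats this range together with the range $s \geq 2r$ under a single uniform argument. Part (b) is essentially formal once one commits to a homogeneous decomposition, so I expect (a) to be where any real thought is needed.
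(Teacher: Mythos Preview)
Your proof is correct and, for part (a), is essentially the paper's argument: the paper also multiplies a homogeneous $f\in J$ by $x_1^{rt-\deg(f)}$ to land in $J\cap\fm^{rt}$ and then compares degrees against $J\fm^{r(t-1)}$, only it phrases the conclusion directly as $\indeg(J)\leq r$ rather than by contrapositive. For part (b) the paper simply cites \cite[Example~2.19]{AA}, so your explicit homogeneous-decomposition argument is a self-contained improvement over the paper's citation.
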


\begin{proof}

Since $J\subseteq \fm^r$, it follows that $r\leq \mathrm{indeg}(J)$.  Let $f\in J$ be a homogeneous polynomial and  $t\geq 1$ be an integer such that $rt\geq \deg(f)$. Then $fx_1^{rt-\deg(f)}$ belongs to $J\cap \fm^{rt}=J\fm^{rt-r}$. Thus $rt=\deg(fx_1^{rt-\deg(f)})\geq rt-r+\mathrm{indeg}(J)$. This completes the proof of (a). 
The statement (b) follows from \cite[Example 2.19]{AA}. 
\end{proof}

\begin{ex}
Let $R$ be a commutative Noetherian  ring and $J \subseteq I$ be ideals in $R$ such that $\VaVa_{J\subseteq I}\neq \{0\}$.
\begin{itemize}
\item[(a)] If $\mathfrak{q}$ is an ideal in $R$ such that $\mathfrak{q}^r \subseteq J \subseteq I \subseteq \mathfrak{q}^s$, for some $r \geq s \geq 1$, then $\indeg(\VaVa_{J\subseteq I})\leq \lceil r/s \rceil$. Because for all $t \geq \lceil r/s \rceil$, we have $J\cap I^t=I^t$, hence 
\[J\cap I^t=I^t=I^{t-\lceil r/s \rceil}I^{\lceil r/s \rceil}=I^{t-\lceil r/s \rceil}(J\cap I^{\lceil r/s \rceil}),\]
for all $t \geq \lceil r/s \rceil$. Therefore, the Artin--Rees number of $J$ realative to $I$ is bounded above by $\lceil r/s \rceil$. The result follows from the fact that $\indeg(\VaVa_{J\subseteq I})$ is bounded above by the Artin-Rees number.

\item[(b)] Let $(R,\fm)$ be a local ring and $J\subseteq I$ be ideals such that $\dim(R/J)=0$. Then there exists an integer $r\geq 1$ such that $\fm^{r}\subseteq J \subseteq I$. It follows from (a) that $\indeg(\VaVa_{J\subseteq I})\leq r$.

\item[(c)] There exists $r \geq 1$ such that $(\sqrt{J})^r \subseteq J \subseteq \sqrt{J}$. Hence $\indeg(\VaVa_{J\subseteq \sqrt{J}})\leq r$, by (a).
\item[(d)]  Let $J\subseteq I$ be ideals in a local ring $(R,\fm)$ such that $I$ is $\fm$-primary and $R/J$ is Cohen-Macaulay of dimension one. Then $\indeg(\VaVa_{J\subseteq I})\leq e(R/J)$ by~\cite[Lemma 6.3]{FPV}. Here $e(R/J)$ denotes the multiplicity of $R/J$.

%\item[(d)] Recall that an ideal $J \subseteq I$ is called a reduction of $I$, if $JI^n =I^{n+1}$, for sufficiently large $n$. For a reduction $J$ of $I$, let
%\[r_J(I)= \min \{t \colon \quad JI^n=I^{n+1}, \text{ for all } n \geq t \} \]
%be the reduction number of $I$ relative to $J$. It is obvious from definition that $N\VaVa_{J\subseteq I}\leq r_J(I)$, whenever $J \subseteq I$ is a reduction of $I$. 
\end{itemize}
\end{ex} 

\section{The initial degree of the Valabrega-Valla module  of monomial ideals}\label{c2}
Let $J$ be a monomial ideal in the polynomial ring $R=\mathbb{K}[{\mathbf x}]$.  Let $\mathcal{G} (J)$ denotes its unique minimal set of generators, we define 
$$t_0(J) := \max \left\{ \deg(F) \colon \quad F \in \mathcal{G}(J) \right\}.$$

Let $J\subseteq I$  be monomial ideals in $R$. 
In view of~\eqref{indef-art-rtype}, we know that if $\VaVa_{J\subseteq I}\neq \{0\}$, then there exists $\ell \leq \mathrm{AR}(J,I)\leq \mathrm{rt}(I/J)$ such that $(\VaVa_{J\subseteq I})_{\ell}\neq \{0\}$. In general finding the Artin-Rees number or even an upper bound for this number  is not easy even for monomial ideals. In the following, we show that if $\VaVa_{J\subseteq I}\neq \{0\}$, then there exists $\ell\leq \min\{t_0(J),\, \mathrm{AR}(J,I)\}$ such that $(\VaVa_{J\subseteq I})_{\ell}\neq \{0\}$ (Theorem~\ref{vvmon}). To prove this, we need the following easy lemma.  
\begin{lem}\label{easy lemma}
	Let $m,g_1, \ldots, g_s$ be polynomials in $R$ such that $m$ is a non-constant polynomial dividing $g_1 \cdots g_s$. Then there exist $r \leq s$, a permutation $\sigma$ of $\{1, \ldots, r\}$, and non-constant polynomials $u_1, \ldots, u_r$ such that $u_i \mid g_{\sigma(i)}$ and $m=u_1 \cdots u_r$.
\end{lem}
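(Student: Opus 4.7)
The plan is to exploit the unique factorization in $R = \mathbb{K}[\mathbf{x}]$ and to distribute the irreducible factors of $m$ among the $g_i$ according to their multiplicities. Fix a set of representatives of associate classes of irreducibles in $R$, and for each irreducible $p$ and each polynomial $h$ let $v_p(h)$ denote the $p$-adic valuation of $h$. Writing $m = \prod_p p^{a_p}$ (with only finitely many $a_p$ positive), the hypothesis $m \mid g_1 \cdots g_s$ translates into the family of inequalities $a_p \leq \sum_{i=1}^s v_p(g_i)$ for every $p$.

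Next, for every irreducible $p$ with $a_p > 0$, I would distribute the $a_p$ copies of $p$ greedily over the indices $1, \ldots, s$ to obtain non-negative integers $c_{p,i}$ satisfying $\sum_{i=1}^s c_{p,i} = a_p$ and $c_{p,i} \leq v_p(g_i)$. Setting $u_i := \prod_p p^{c_{p,i}}$ for $i = 1, \ldots, s$, one checks at once that $u_i \mid g_i$ and that $\prod_{i=1}^s u_i = \prod_p p^{a_p} = m$.

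Finally, some of the $u_i$ may be units; let $S = \{i \in \{1,\ldots,s\} : u_i \text{ is non-constant}\}$ and $r = |S|$. Choose any bijection $\sigma : \{1,\ldots,r\} \to S$ and relabel to get the required $u_1, \ldots, u_r$. Since $m$ itself is non-constant, at least one $u_i$ must be non-constant, so $r \geq 1$, and clearly $r \leq s$. There is no genuine obstacle here: the whole argument rests only on unique factorization and an elementary greedy distribution of multiplicities. An equally valid alternative is a short induction on $s$ using the standard UFD identity that $m/\gcd(m,g_1)$ divides $g_2 \cdots g_s$ whenever $m \mid g_1 \cdots g_s$, peeling off the factor $\gcd(m,g_1)$ (if non-constant) and reducing to one fewer polynomial.
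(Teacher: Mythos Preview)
Your proof is correct. The primary argument you give---distributing the irreducible factors of $m$ greedily among the $g_i$ via the valuations $v_p$---is a direct, non-inductive proof that rests only on unique factorization in $\mathbb{K}[\mathbf{x}]$; the paper instead argues by induction on $s$, at each step peeling off $u_1 = \gcd(m, g_1)$ (after reordering so that this gcd is non-constant) and applying the inductive hypothesis to $m/\gcd(m,g_1) \mid g_2 \cdots g_s$. This inductive route is precisely the ``equally valid alternative'' you sketch in your final sentence, so you have in fact recovered the paper's own proof as well. The direct version has the advantage of producing all the $u_i$ at once and making the role of the UFD structure explicit; the inductive version is marginally shorter to write but requires the preliminary reordering step to guarantee $u_1$ is non-constant. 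One cosmetic point: as you set things up, each discarded $u_i$ equals $1$ (the empty product), so removing them does not alter the product and no unit-absorption is needed.
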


\begin{proof}
	We use induction on $s$ to obtain the assertion. The statement is clear for $s=1$. Let $s>1$ and the assertion holds for all polynomials $m, g_1, \ldots, g_{s-1}$ with $m$ a nono-constant polynomial dividing $g_1 \cdots g_{s-1}$. Let $m,g_1, \ldots, g_s$ be polynomials in $R$ such that $m$ is a non-constant polynomial dividing $g_1 \cdots g_s$ and $m=f_1 \cdots f_k$ be the decomposition of $m$ into prime components. Without loss of generality, assume that $f_1 \mid g_1$. Let $u_1=\gcd(m,g_1)$ and $M={m}/{\gcd(m,g_1)}$. If $M$ is a constant polynomial, then we are done. Otherwise, $M$ is a non-constant polynomial dividing $g_2 \cdots g_s$. By induction hypothesis, there exist $r \leq s$, a permutation $\sigma$ of $\{2, \ldots, r\}$, and non-constant polynomials $u_2, \ldots u_r$ such that $u_i \mid g_ {\sigma(i)}$ ($i=2, \ldots, r$) and $M=u_2 \cdots u_s$. Then the polynomials $u_1, \ldots, u_r$ satisfy the required properties.
\end{proof}

\begin{thm}\label{vvmon}
Let $J \subseteq I$ be monomial ideals in $R=\mathbb{K}[\mathbf{x}]$.  If $\VaVa_{J\subseteq I}\neq \{0\}$, then $$\indeg(\VaVa_{J\subseteq I})\leq \min\{t_0(J),\ \mathrm{AR}(J,I)\}.$$
\end{thm}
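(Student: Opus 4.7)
The bound $\indeg(\VaVa_{J\subseteq I}) \leq \mathrm{AR}(J,I)$ already appears in \eqref{indef-art-rtype}, so the task reduces to proving $\indeg(\VaVa_{J\subseteq I}) \leq t_0(J)$. I plan to argue by contradiction using the minimality of the initial degree. Set $\ell := \indeg(\VaVa_{J\subseteq I})$ and suppose toward a contradiction that $\ell > t_0(J)$. Since $J$, $I$ and their powers are monomial ideals, $J\cap I^\ell$ and $JI^{\ell-1}$ are monomial, so I may pick a \emph{monomial} $m \in (J\cap I^\ell)\setminus JI^{\ell-1}$; the minimality of $\ell$ gives $J\cap I^s = JI^{s-1}$ for every $s<\ell$.

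Fix a factorization $m = u_1\cdots u_\ell\cdot w$ with $u_i \in \mathcal{G}(I)$ and $w$ a monomial, and choose a minimal generator $g\in \mathcal{G}(J)$ dividing $m$. Since $\deg g \leq t_0(J) < \ell$, Lemma~\ref{easy lemma} applied to $g\mid u_1\cdots u_\ell\cdot w$ yields a factorization $g = v_1\cdots v_r$ into non-constant monomials with $r\leq \deg g \leq t_0(J)$, such that (after reordering) each $v_j$ divides a distinct factor among $u_1,\dots,u_\ell, w$.

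The crucial combinatorial step is a pigeonhole: the $v_j$ occupy at most $r \leq t_0(J) < \ell$ of the $\ell+1$ available factors, and at most one of the occupied slots is $w$, so at least $\ell-r \geq 1$ of the indices $i\in\{1,\dots,\ell\}$ have the property that no $v_j$ divides $u_i$. Pick such an index $i_0$. Then $g = v_1\cdots v_r$ divides the product of the "used" factors, none of which is $u_{i_0}$, so $g\mid m/u_{i_0}$. Hence $m/u_{i_0}\in J\cap I^{\ell-1}$, and by minimality of $\ell$ this set equals $JI^{\ell-2}$. Multiplying back by $u_{i_0}\in I$ gives $m = u_{i_0}\cdot(m/u_{i_0}) \in I\cdot JI^{\ell-2} = JI^{\ell-1}$, contradicting the choice of $m$.

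The main obstacle is the pigeonhole bookkeeping: one must simultaneously use the degree bound $r \leq t_0(J) < \ell$ and the fact that the monomial $w$ accounts for at most one "used" slot, in order to guarantee that the free index $i_0$ can be taken among the $u_i$'s rather than being forced to be the slot $w$. Once that free $u_{i_0}$ is extracted, the descent to $I^{\ell-1}$ and the inductive use of minimality of $\ell$ are routine.
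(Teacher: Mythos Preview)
Your argument is correct and follows essentially the same route as the paper: pick a monomial witness at the initial degree, factor it through $I$, apply Lemma~\ref{easy lemma} to a minimal generator of $J$ to locate a ``short'' sub-product lying in $J\cap I^{s}$, and invoke minimality of the initial degree. The only cosmetic differences are that the paper absorbs your extra factor $w$ into one of the $u_i$'s and argues directly (producing a level $s\le t_0(J)$ with $(\VaVa_{J\subseteq I})_s\neq 0$) rather than by contradiction with a one-step descent.
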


\begin{proof}
Let $s_0=\indeg(\VaVa_{J\subseteq I})$. By virtue of~\eqref{indef-art-rtype}, it is enough to show that $s_0\leq t_0(J)$.
Clearly, $J I^{s_0-1}\subseteq  J \cap I^{s_0}$. Hence there exists a generator $g \in J \cap I^{s_0}$, such that $g \notin J I^{s_0-1}$. Let write $g=g_1 \cdots g_{s_0}$ with  $g_i \in I$. 
	
	%If $g_i \in J$, for some $1 \leq i \leq s_0$,  then $g= g_i \left( g_1 \cdots g_{i-1} g_{i+1} \cdots g_{s_0} \right) \in J I^{s_0-1}$, which is a contradiction. However, the monomial $g$ is in $J$, hence there exists a monomial $m \in \mathcal{G}(J)$ such that, $m | g$ and $m \nmid g_i$, for $i=1, \ldots s_0$. 
	
Since $g \in J$, there exists a monomial $m \in \mathcal{G}(J)$ such that, $m | g$. We use the Lemma~\ref{easy lemma} to obtain $s \leq s_0$, a permutation $\sigma$ of $\{1, \ldots, s \}$, and the monomials $u_1, \ldots, u_s$ such that $u_i \mid g_{\sigma(i)}$ and $m=u_1 \cdots u_s$. Without loss of generality, assume that $\sigma(i) =i$ for all $i=1, \ldots, s$. Then, $s \leq \deg (m) \leq t_0(J)$, and $g_1 \cdots g_s \in J \cap I^{s}$. 
If $g_1 \cdots g_s \in J I^{s-1}$, then $g = \left( g_1 \cdots g_s \right) \, \left( g_{s+1} \cdots g_{s_0} \right) \in  J I^{s_0-1}$
which contradicts with our choice of $g$. Thus we have,	$J \cap I^{s} \neq J I^{s-1}.$
The minimality of $s_0$ implies that, $s_0 \leq s \leq t_0(J)$. This completes the proof.
\end{proof}
Let $J\subseteq I$ be monomial ideals generated in degree $2$. Then by virtue of Theorem~\ref{vvmon}, we know that either $\VaVa_{J\subseteq I}=\{0\}$ or $\indeg(\VaVa_{J\subseteq I})=2$. In the following we characterize those pair of ideals $J\subseteq I$ such that $\VaVa_{J\subseteq I}=\{0\}$ in the case that $J$ and $I$ are the edge ideals of some graphs. For this characterization, we need the following definition.  
\begin{defn}\label{embedded subgraphs}
Let $G'$ be a subgraph of $G$.
\begin{itemize}
\item[(i)] the graph $G'$ is called \textit{almost $C_3$-embedded} subgraph of $G$ if for all $3$-cycle $i-j-k-i$ in $G$ with $\{i,j\} \in E(G')$, either $\{i,k\} \in E(G')$ or $\{j,k\} \in E(G')$.
\item[(ii)] the graph $G'$ is called \textit{almost $P_3$-embedded} subgraph of $G$ if for all $3$-path $i'-i-j-j'$ in $G$ with $\{i,j\} \in E(G')$ and $\{i',j'\} \notin E(G)$, either 
\begin{itemize}
\item[(a)] $\{i',i\} \in E(G')$, or
\item[(b)] $\{j',j\} \in E(G')$, or
\item[(c)] $\{i',j\} \in E(G')$ and $\{i,j'\} \in E(G)$, or
\item[(d)] $\{i,j'\} \in E(G')$ and $\{i',j\} \in E(G)$.
\end{itemize}
\end{itemize}
\end{defn}

\begin{ex} \label{examples of embedded graphs} \mbox{}
\begin{itemize}
\item[(i)] Since the bipartite graph does not have any cycle of odd length, it follows that every subgraph of a bipartite graph is almost $C_3$-embedded subgraph.
\item[(ii)] If $G$ is a complete graph or complete bipartite graph, then every subgraph of $G$ is almost $P_3$-embedded subgraph.
\item[(iii)] Every subgraph of a complete bipartite graph is both almost $C_3$-embedded and almost $P_3$-embedded subgraph.
\end{itemize}
\end{ex}

\begin{prop} \label{graph vv}
Let $G'$ be a subgraph of $G$, $J=I(G')$ and $I=I(G)$.
\begin{itemize}
\item[\rm (i)] $\VaVa_{J \subseteq I} =\{0\}$ if and only if $G'$ is both almost $C_3$-embedded and almost  $P_3$-embedded subgraph of $G$.
\item[\rm (ii)] If $G=K_n$ is the complete graph on the vertex set $[n]=\{1, \ldots, n\}$, then $\VaVa_{J \subseteq I} =\{0\}$ if and only if for all $\{i,j\} \in E(G')$, $N_{G'}(i) \cup N_{G'}(j) =[n]$.
\item[\rm (iii)] If $G$ is the complete bipartite graph, then $\VaVa_{J \subseteq I} =\{0\}$.
\end{itemize}
\end{prop}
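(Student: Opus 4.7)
Since $J = I(G')$ and $I = I(G)$ are both generated in degree two, Theorem~\ref{vvmon} forces $\indeg(\VaVa_{J \subseteq I}) = 2$ whenever this module is nonzero, so the vanishing of $\VaVa_{J \subseteq I}$ reduces to the single equality $J \cap I^2 = JI$. My plan is to read this equality at the level of monomial generators: the generators of $J \cap I^2$ are the monomials $m = \lcm(x_i x_j,\, x_a x_b x_c x_d)$ with $\{i,j\} \in E(G')$ and $\{a,b\}, \{c,d\} \in E(G)$, while $m \in JI$ exactly when $m$ is divisible by some product $(x_{i'} x_{j'})(x_a x_b)$ with $\{i',j'\} \in E(G')$ and $\{a,b\} \in E(G)$.

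For part (i), I would classify such $m$ by the intersection pattern of $\{i,j\}$ with $\{a,b,c,d\}$ and by whether $\{a,b\} \cap \{c,d\}$ is empty or a single vertex. Whenever $\{i,j\}$ has at most one vertex in $\{a,b,c,d\}$, or whenever $\{i,j\} \in \{\{a,b\},\{c,d\}\}$, an immediate decomposition places $m$ in $JI$. The only critical configurations that survive are two. First, the \emph{path configuration}: $\{a,b\}$ and $\{c,d\}$ are disjoint edges of $G$ and $\{i,j\}$ is one of the four crossing pairs between them, so $m = x_a x_b x_c x_d$ is squarefree of degree $4$, and membership in $JI$ amounts to the existence of a pairing of $\{a,b,c,d\}$ into a $G'$-edge and a $G$-edge. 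Taking the $3$-path in $G$ whose middle edge is $\{i,j\}$ and whose endpoints are the remaining two vertices $i', j'$, the four alternatives (a)--(d) of Definition~\ref{embedded subgraphs}(ii) match exactly the available pairings, modulo the case $\{i',j'\} \in E(G)$ which is handled for free by $(x_i x_j)(x_{i'} x_{j'}) \mid m$. Second, the \emph{triangle configuration}: $\{a,b\}$ and $\{a,d\}$ share a vertex $a$ and $\{i,j\} = \{b,d\} \in E(G')$, forming the $3$-cycle $b$-$d$-$a$ in $G$; here $m = x_a^2 x_b x_d$, and since no edge of $G$ divides $x_a^2$, membership in $JI$ forces $\{a,b\} \in E(G')$ or $\{a,d\} \in E(G')$, which is precisely Definition~\ref{embedded subgraphs}(i) at this triangle. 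For the reverse direction of (i) I collect the case analysis; for the forward direction I would exhibit the explicit witnesses $x_i x_j x_k^2 = (x_i x_k)(x_j x_k) \in J \cap I^2$ (for every triangle $i$-$j$-$k$ of $G$ with $\{i,j\} \in E(G')$) and $x_{i'} x_i x_j x_{j'} = (x_{i'} x_i)(x_j x_{j'}) \in J \cap I^2$ (for every $3$-path $i'$-$i$-$j$-$j'$ of $G$ with $\{i,j\} \in E(G')$ and $\{i',j'\} \notin E(G)$), and read off the necessity of each embedding condition from the requirement that these witnesses lie in $JI$.

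Part (ii) is then an immediate specialization: when $G = K_n$ the hypothesis $\{i',j'\} \notin E(G)$ in Definition~\ref{embedded subgraphs}(ii) is never satisfied, so almost $P_3$-embedded is vacuous; the almost $C_3$-embedded condition says that for every $\{i,j\} \in E(G')$ and every $k \in [n] \setminus \{i,j\}$ one has $\{i,k\} \in E(G')$ or $\{j,k\} \in E(G')$, i.e.\ $[n] \setminus \{i,j\} \subseteq N_{G'}(i) \cup N_{G'}(j)$, and adjoining the trivial memberships $i, j \in N_{G'}(i) \cup N_{G'}(j)$ gives $N_{G'}(i) \cup N_{G'}(j) = [n]$. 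Part (iii) follows immediately from (i) combined with Example~\ref{examples of embedded graphs}(iii).

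The main obstacle I anticipate is the bookkeeping in the path configuration: checking carefully that the four sub-options (a)--(d) of Definition~\ref{embedded subgraphs}(ii) match bijectively with the pairings of $\{i',i,j,j'\}$ into a $G'$-edge and a $G$-edge (with the third pairing $\{i,j\}\cup\{i',j'\}$ being excluded precisely by the non-edge hypothesis $\{i',j'\}\notin E(G)$). The remaining case analyses are routine.
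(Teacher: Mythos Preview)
Your proposal is correct and follows essentially the same argument as the paper's proof: both reduce via Theorem~\ref{vvmon} to the single condition $J\cap I^2\subseteq JI$, both analyze the generators $\lcm(\mathbf{x}_e,\mathbf{x}_{e'}\mathbf{x}_{e''})$ by the intersection pattern of $e$ with $e'\cup e''$, and both isolate the same two critical configurations (the triangle and the $3$-path) with the same explicit witnesses for the forward direction. Your treatment of parts (ii) and (iii) likewise matches the paper's, so there is nothing substantive to add.
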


\begin{proof}
By Theorem~\ref{vvmon}, $\VaVa_{J \subseteq I} =\{0\}$ if and only if $J \cap I^2 \subseteq JI$. We know that $J \cap I^2$ is generated by monomials $\lcm (\textbf{x}_e, \textbf{x}_{e'}\textbf{x}_{e''})$ where $e \in E(G')$ and $e', e'' \in E(G)$. If $e \cap (e' \cup e'')\subseteq e'$ or $e \cap (e' \cup e'')\subseteq e''$, then clearly $\lcm (\textbf{x}_e, \textbf{x}_{e'}\textbf{x}_{e''}) \in  JI$.

(i) Assume that $\VaVa_{J \subseteq I} =\{0\}$. We show that $G'$ is both almost $C_3$-embedded and almost $P_3$-embedded subgraph of $G$.
Let $i-j-k-i$ be a $3$-cycle in $G$ with $\{i,j\} \in E(G')$ and $\{i,k\}, \{j,k\} \notin E(G')$. Then $x_ix_jx_k^2 =\lcm (x_ix_j, x_ix_k\ x_jx_k) \in J \cap I^2 \setminus JI$. This shows that $G'$ is a almost $C_3$-embedded subgraph of $G$. Similarly, let $i'-i-j-j'$ be a $3$-path  in $G$ with $\{i,j\} \in E(G')$ and $\{i',j'\} \notin E(G)$. Then 
\[ x_ix_jx_{i'}x_{j'} = \lcm (x_ix_j, x_ix_{i'}\ x_jx_{j'}) \in J \cap I^2 \subseteq JI.\]
It follows that one of the conditions (a)-(d) of Definition~\ref{embedded subgraphs}(ii) satisfies.

Conversely, assume that $G'$ is both almost $C_3$-embedded and almost $P_3$-embedded subgraph of $G$. By the above discussion, it is enough to show that $\lcm (\textbf{x}_e, \textbf{x}_{e'}\textbf{x}_{e''}) \in  JI$, for all $e \in E(G')$, $e', e'' \in E(G)$ with $|e \cap e'| = |e \cap e''| =1$ and $e \cap e' \neq e \cap e''$. Without loss of generality, assume that $e=\{i,j\}$, $e'=\{i,i'\}$ and $e''=\{j,j'\}$. Then $\lcm (\textbf{x}_e, \textbf{x}_{e'}\textbf{x}_{e''})=x_i x_j x_{i'}x_{j'}$. If $i'=j'$, then $i-j-i'-i$ is a $3$-cycle in $G$ with $\{i,j\} \in E(G')$, so our assumption implies that $\lcm (\textbf{x}_e, \textbf{x}_{e'}\textbf{x}_{e''})=x_i x_j x_{i'}^2 \in JI$. Otherwise, $i'-i-j-j'$ is a $3$-path in $G$ with $\{i,j\} \in E(G')$. If $\{i',j'\} \in E(G)$ then clearly  $\lcm (\textbf{x}_e, \textbf{x}_{e'}\textbf{x}_{e''})=x_i x_j x_{i'}x_{j'} \in JI$. Otherwise, by our assumption, one of of the conditions (a)-(d) of Definition~\ref{embedded subgraphs}(ii) satisfies. This is equivalent to  say that $\lcm (\textbf{x}_e, \textbf{x}_{e'}\textbf{x}_{e''})=x_i x_j x_{i'}x_{j'} \in JI$. This completes the proof of (i).

(ii) In view of part (i), it is enough to show that the following statements are equivalent:
\begin{enumerate}
\item $G'$ is both almost $C_3$-embedded and almost $P_3$-embedded subgraph of $K_n$.
\item for all $\{i,j\} \in E(G')$, $N_{G'}(i) \cup N_{G'}(j) =[n]$.
\end{enumerate}
If $G'$ is both almost $C_3$-embedded and almost $P_3$-embedded subgraph of $K_n$, $\{i,j\} \in E(G')$ and $k \in [n]$, then $i-j-k-i$ is a $3$-cycle in $K_n$. So by our assumption, either $\{i,k\} \in E(G')$ or $\{j,k\} \in E(G')$, i.e. $k \in N_{G'}(i) \cup N_{G'}(j)$. Conversely, assume that $N_{G'}(i) \cup N_{G'}(j) =[n]$, for all $\{i,j\} \in E(G')$. Let $i-j-k-i$ be a $3$-cycle in $G$ with $\{i,j\} \in E(G')$. Then by our assumption, $k \in N_{G'}(i) \cup N_{G'}(j)$. It follows that either $\{i,k\} \in E(G')$ or $\{j,k\} \in E(G')$. Hence $G'$ is almost $C_3$-embedded subgraph of $G=K_n$. Since $G=K_n$ is a complete graph, it is obvious that $G'$ is almost $P_3$-embedded subgraph of $G$. Thus (1) and (2) are equivalent.

(iii) By Example~\ref{examples of embedded graphs}(iii), every subgraph of a complete bipartite graph is both almost $C_3$-embedded and almost $P_3$-embedded subgraph. The result follows from (i).
\end{proof}

\begin{ex}\label{biparti-subgrpah}
Let $G=K_n$ be complete graph on the vertex set $[n]$ and $G'$ be a  subgraph of $G$ that $\VaVa_{I(G') \subseteq I(G)} =\{0\}$. Let $\chi$ be a minimal vertex coloring of $G'$ and $A_1, \ldots, A_{\chi(G')}$ be the class coloring of the vertex set of $G'$, by which we mean $A_i  = \{ u \in [n] \colon \; \chi(u)=i\}$. Then by minimality of coloring, for all $1 \leq k \neq k' \leq \chi(G')$, there exist $u \in A_k$ and $v \in A_{k'}$ such that $\{u,v\} \in E(G')$. Then Theorem~\ref{graph vv}(ii) implies that $u$ (respectively $v$) is adjacent to all vertices in $A_{k'}$ (respectively $A_k$). This shows that $G'$ is a complete multipartite graph with $A_1 \cup \cdots \cup A_{\chi(G')}=[n]$. Conversely, if $G'$ is a complete multipartite graph whose the vertex set is $[n]$, then clearly $N(u) \cup N(v)=[n]$ for all $\{u,v\} \in E(G')$. Consequently, $\VaVa_{I(G') \subseteq I(G)} =\{0\}$ if and only if $G'$ is a complete multipartite graph on the vertex set $[n]$. Otherwise, $\indeg(\VaVa_{I(G') \subseteq I(G)}) =2$ by Theorem~\ref{vvmon}.
\end{ex}

In the following we find  generalizations of Proposition~\ref{graph vv}(ii) and (iii) for a pair of ideals generated in degree $d>2$. To achieve  this, we replace the graphs by more general structure, called  clutters. 

\begin{defn}
Let $[n]=\{ 1, \ldots, n\}$. A \textit{clutter} $\mathcal{C}$ on vertex set $[n]$ is a collection of subsets of $[n]$, called \textit{circuits} of $\mathcal{C}$, such that $e_1 \nsubseteq e_2$, for all $e_1$ and $e_2$ in $\mathcal{C}$. We call
$V(\C)=\cup_{F\in \C} F$ the \emph{set of vertices of $\C$}. A \textit{$d$-circuit} is a circuit consisting of exactly $d$ vertices and a clutter is \textit{$d$-uniform}, if every circuit has exactly $d$ vertices.

For a non-empty clutter $\C$ on vertex set $[n]$, we define the ideal $I(\C)$, as follows:
$$I(\C) = \left( \textbf{x}_F \colon \quad F \in \C \right)$$
and we define $I(\varnothing) = 0$. The ideal $I\left( \C \right)$ is called \textit{facet ideal} of $\C$. Here $\textbf{x}_F$ is $\prod_{i\in F}x_i$.

Following \cite{d-partite}, we say that a $d$-uniform clutter $\C$ is \emph{$d$-partite}, if $V(\C)$ can be written as the union of mutually disjoint subsets $V_1, \ldots, V_{d}$, such that each circuit of $\C$ meets each $V_i$ in exactly  one vertex. If moreover, $\C$ contains all $d$-subsets of $V(\C)$ which intersect each $V_i$ in exactly  one vertex, we say that $\C$ is \emph{complete $d$-partite} clutter. The partition $\{V_i\colon \; i \in [d] \}$ as above is called a \emph{$d$-partition} of $\C$.
\end{defn}
 
\begin{thm}\label{d-uni-d-part}
Let $\mathcal{C}$ be a complete $d$-partite $d$-uniform clutter and $\mathcal{C}' \subseteq \mathcal{C}$. Then $\VaVa_{I(\mathcal{C}') \subseteq I(\mathcal{C})} =\{0\}$.
\end{thm}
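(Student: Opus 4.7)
The plan is to prove directly that $J\cap I^t=JI^{t-1}$ for every $t\ge 1$, which gives $\VaVa_{J\subseteq I}=\{0\}$. (Alternatively, Theorem~\ref{vvmon} reduces the task to $t\le d$, but the argument below is uniform in $t$ and costs no more.) The inclusion $JI^{t-1}\subseteq J\cap I^t$ is automatic, and since both sides are monomial ideals it suffices to take a monomial $m\in J\cap I^t$ and show $m\in JI^{t-1}$.

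The heart of the proof is the following combinatorial reformulation $(\star)$: if $V_1,\ldots,V_d$ is the $d$-partition of $V(\mathcal{C})$ and one sets $\deg_{V_i}(m):=\sum_{v\in V_i}\deg_{x_v}(m)$, then for every monomial $m$ and every $t\ge 1$,
\[
m\in I^t \iff \deg_{V_i}(m)\ge t \text{ for all } i\in[d].
\]
The forward direction is immediate: each generator $\mathbf{x}_F$ contributes exactly $1$ to $\deg_{V_i}$, so any product of $t$ generators contributes $t$, and divisibility preserves the inequality. For the reverse direction, one selects in each $V_i$ a multiset $T_i$ of $t$ vertices whose multiplicities are bounded by their exponents in $m$, lists $T_i=(v^{(i)}_1,\ldots,v^{(i)}_t)$, and defines $F_j=\{v^{(1)}_j,\ldots,v^{(d)}_j\}$. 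Completeness of $\mathcal{C}$ as a $d$-partite clutter guarantees $F_j\in\mathcal{C}$, and $\prod_{j=1}^t\mathbf{x}_{F_j}$ divides $m$ by construction.

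Granted $(\star)$, the rest is a one-line computation. Take a monomial $m\in J\cap I^t$. There is some $G\in\mathcal{C}'$ with $\mathbf{x}_G\mid m$, and by $(\star)$ we have $\deg_{V_i}(m)\ge t$ for each $i$. Since $|G\cap V_i|=1$, dividing gives $\deg_{V_i}(m/\mathbf{x}_G)=\deg_{V_i}(m)-1\ge t-1$ for each $i$. Applying $(\star)$ with $t-1$ in place of $t$ yields $m/\mathbf{x}_G\in I^{t-1}$, hence $m=\mathbf{x}_G\cdot(m/\mathbf{x}_G)\in JI^{t-1}$.

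The only step that requires real care is the reverse direction of $(\star)$, and there the essential hypothesis is the \emph{completeness} of the $d$-partite clutter: one must know that every transversal of the partition $V_1,\ldots,V_d$ is a circuit of $\mathcal{C}$, or else the greedily assembled $F_j$'s could escape $\mathcal{C}$. This is where the argument genuinely uses that $\mathcal{C}$ contains all such transversals, and is the main point one would want to highlight when writing out the proof.
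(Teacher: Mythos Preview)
Your proof is correct and takes a genuinely different, cleaner route than the paper. The paper works with the explicit generators $\lcm(\mathbf{x}_F,\mathbf{x}_{F_1}\cdots\mathbf{x}_{F_t})$ of $J\cap I^t$ and performs a careful redistribution: it strips $F$ off the $F_i$'s via the sets $A_i$, then repairs each $F_i\setminus A_i$ using pieces $B_i$ borrowed from $F_t$, checking through several set-theoretic claims that the resulting $d$-sets are again transversals of the partition. Your argument bypasses all of this by first isolating the characterization $(\star)$ of membership in $I^t$ via the partition-degree vector $(\deg_{V_1}(m),\ldots,\deg_{V_d}(m))$; once that is in hand, the passage from $J\cap I^t$ to $JI^{t-1}$ is the single observation that dividing by $\mathbf{x}_G$ drops each coordinate by exactly one. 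The advantage of your approach is transparency and brevity---the completeness hypothesis is used exactly once, in the backward direction of $(\star)$, and its role is unmistakable. The paper's argument, while more laborious, has the minor virtue of exhibiting an explicit factorization of each lcm generator, which could in principle be mined for finer information (say about which circuits of $\mathcal{C}'$ actually appear), but for the bare vanishing statement your method is strictly preferable.
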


\begin{proof}
Let $V_1, \ldots, V_d$ denote the $d$-partition of the vertices of $\C$, $J=I(\mathcal{C}')$ and $I = I(\mathcal{C})$.  We have to show that $J \cap I^t = JI^{t-1}$, for all $t \geq 2$. Let $F \in \mathcal{C'}$ and $F_1, \ldots, F_t \in \mathcal{C}$ be $d$-subsets of $[n]$ and put
\[ 
\begin{array}{lcll}
A_1 & = &F \cap F_1,\\
A_2 & = & \left( F \cap F_2 \right) \setminus A_1,\\
& \vdots & \\
A_t &= & \left( F \cap F_t \right) \setminus \left( A_1 \cup A_2 \cup \cdots \cup A_{t-1} \right).
\end{array}
\]
Then, $\lcm(\mathbf{x}_F, \mathbf{x}_{F_1} \cdots \mathbf{x}_{F_1})= \mathbf{x}_{F} \cdot \mathbf{x}_{F_1 \setminus A_1} \cdot \ldots \cdot \mathbf{x}_{F_{t-1} \setminus A_{t-1}} \cdot \mathbf{x}_{F_t \setminus A_t}$. It is enough to show that $\mathbf{x}_{F_1 \setminus A_1} \cdot \ldots \cdot \mathbf{x}_{F_{t-1} \setminus A_{t-1}} \cdot \mathbf{x}_{F_t \setminus A_t} \in I^{t-1}$. 
If $1 \leq i,j,k \leq t$, $1 \leq s \leq d$ be positive integers such that $F_i \cap F_j \cap V_s \neq \varnothing$ and $F_i \cap F_k \cap V_s \neq \varnothing$, then $F_i \cap F_j \cap V_s= F_i \cap F_k \cap V_s$, because $|G \cap V_k|=1$ for all $G \in \mathcal{C}$. Let
\begin{align*}
& T_i=\{t\in [d] \colon \quad V_t \cap A_i \neq \varnothing \}, &1 \leq i \leq t,\\
& B_1=F_t \cap \left( \mathop{\cup}\limits_{s \in T_1}V_s \right), & \\
& B_i=\left( F_t \setminus \left( \mathop{\cup}\limits_{r=1}^{i-1} B_r \right) \right) \cap \left( \mathop{\cup}\limits_{s \in T_i} V_s \right), & 2 \leq i\leq t.
\end{align*}
We claim that 
\begin{itemize}
\item[(a)] $B_i \cap (F_i \setminus A_i) = \varnothing$,
\item[(b)] $B_i \subseteq \left( F_t \setminus \left( \mathop{\cup}\limits_{r=1}^{i-1} B_r \right) \right) \setminus A_t$,
\item[(c)] $|B_i|=|A_i|$.
\end{itemize}
for all $1 \leq i \leq s$.
\begin{itemize}
\item[] \textbf{Proof of the claim.} (a) If $y \in B_i \cap F_i$, then by definition, there exists $s \in T_i$ such that $y \in F_t \cap V_s$. Hence $F_i \cap F_t \cap V_s \neq \varnothing$ and $A_i \cap V_s \neq \varnothing$. It follows that $ A_i \cap V_s = F_i \cap F \cap V_s$. Then by above discussion, $y \in  F_i \cap F_t \cap V_s =  F_i \cap F \cap V_s =A_i \cap V_s$. Thus $y \in A_i$.\\
(b) Let $y \in B_i$. If $y \notin F$, then clearly $y$ belongs to the right side. Otherwise, choose $s \in T_i$ such that $y \in F_t \cap V_s$. Then $F \cap F_t \cap V_s \neq \varnothing$ and $A_i \cap V_s \neq \varnothing$. It follows that $\varnothing \neq A_i \cap V_s = F \cap F_i \cap V_s$. Then by above discussion, $y \in  F \cap F_t \cap V_s =  F \cap F_i \cap V_s =A_i \cap V_s$. Thus $y \in A_i$. This completes the proof of (b).\\
(c) It is easy to check that $|T_i|= |A_1|+ \cdots + |A_i|$, for all $i=1, \ldots, t$. Next, we note that for all $s \in T_i$ one has $|F_t \cap V_s|=1$ and that $F_t \cap V_s \neq F_t \cap V_{s'}$, if $s \neq s' \in T_i$. This shows that 
\begin{equation} \label{|Ft cap Vs|}
|F_t \cap \left( \mathop{\cup}\limits_{s \in T_i} V_s \right)| = |T_i|= |A_1|+ \cdots + |A_i |.
\end{equation}
In particular, $|B_1|=|A_1|$. Assume by induction that $|B_j|=|A_j|$, for all $j=1, \ldots, i$. Then by \eqref{|Ft cap Vs|} we have
\begin{align*}
|B_i| &= |\left( F_t \setminus \left( \mathop{\cup}\limits_{r=1}^{i-1} B_r \right) \right) \cap \left( \mathop{\cup}\limits_{s \in T_i} V_s \right)| \\
& = |T_i|-|B_1|-\cdots-|B_{i-1}| \\
& = |A_1|+ \cdots + |A_i |-|B_1|-\cdots-|B_{i-1}| =|A_i|.
\end{align*}
\end{itemize}
Now we rewrite the lcm as follows
\begin{align*}
\lcm(\mathbf{x}_F, \mathbf{x}_{F_1} \cdots \mathbf{x}_{F_t})& = \mathbf{x}_{F} \cdot \mathbf{x}_{F_1 \setminus A_1} \cdot \ldots \cdot \mathbf{x}_{F_{t-1} \setminus A_{t-1}} \cdot \mathbf{x}_{F_t \setminus A_t}\\
&= \mathbf{x}_{F} \cdot \mathbf{x}_{(F_1 \setminus A_1) \cup B_1} \cdot  \ldots \cdot \mathbf{x}_{(F_{t-1} \setminus A_{t-1}) \cup B_{t-1}} \cdot \mathbf{x}_{F_t \setminus B_1 \setminus \cdots \setminus B_{t-1} \setminus A_t}
\end{align*}
The monomial in the right side belongs to $JI^{t-1}$, by (a)--(c) above.
\end{proof}
Let $\fm^{[d]}$ denotes the $d$th square-free power of the maximal ideal $\fm =(x_1, \ldots,x_n)$ in $\mathbb{K}[\mathbf{x}]$. That is $\fm^{[d]}= \left( \mathbf{x}_F \colon F \subseteq [n], \; |F|=d \right)$.
In the rest of this section, we consider the pair $J\subseteq \fm^{[d]}$ and we characterize those ideals $J\subseteq \fm^{[3]}$ such that $\VaVa_{J\subseteq \fm^{[3]}}=\{0\}$. 
\begin{lem} \label{square-free power of maximal ideal}
Let $J$ be a square-free monomial ideal generated in degree $d$. If $J \cap (\fm^{[d]})^d = J(\fm^{[d]})^{d-1}$, then for all $x_{i_1} \cdots x_{i_d} \in \mathcal{G}(J)$ and  for all $\textbf{\rm \textbf{x}}_G \in \mathcal{G}(\fm^{[d-1]})$, there exists $1 \leq r \leq d$, such that $\textbf{\rm \textbf{x}}_{G \cup \{i_r\}} \in J$.
\end{lem}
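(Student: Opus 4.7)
The plan is to construct, for any generator $\textbf{x}_F$ with $F=\{i_1,\ldots,i_d\}$ and any $(d-1)$-subset $G\subseteq[n]$, a specific monomial $M\in J\cap(\fm^{[d]})^d$ whose decomposition in $J(\fm^{[d]})^{d-1}$ is rigid enough to force one of the sets $G\cup\{i_r\}$ to be a minimal generator of $J$.

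First I would put $F'=F\setminus G$, which is nonempty since $|F|=d>d-1=|G|$, and I would choose any monomial $N$ of degree exactly $d$ whose support equals $F'$—for instance $N=\textbf{x}_{F'}\cdot x_{v_0}^{|G\cap F|}$ for a fixed $v_0\in F'$, so that every variable in $F'$ has positive exponent and the exponents sum to $d$. Set $M=\textbf{x}_G^d\cdot N$. Two preliminary checks follow: (i) $\textbf{x}_F\mid M$, hence $M\in J$, because each $v\in F$ has positive multiplicity in $M$ (from $\textbf{x}_G^d$ if $v\in G\cap F$, from $N$ if $v\in F'$); and (ii) $M\in(\fm^{[d]})^d$ via the explicit presentation $M=\prod_{v\in F'}(\textbf{x}_{G\cup\{v\}})^{a_v}$, where $N=\prod_{v\in F'}x_v^{a_v}$ with $\sum_v a_v=d$, and each $G\cup\{v\}$ is genuinely a $d$-subset since $v\in F'\subseteq[n]\setminus G$.

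By the hypothesis, $M\in J(\fm^{[d]})^{d-1}$, and since $\deg M=d^2$ equals the degree of every minimal generator of this ideal, the monomial $M$ must in fact equal one such generator: $M=\textbf{x}_{F''}\cdot\textbf{x}_{H_1}\cdots\textbf{x}_{H_{d-1}}$ with $\textbf{x}_{F''}\in\mathcal{G}(J)$ and $|H_i|=d$. A multiplicity count then finishes the proof. Every $v\in G$ has multiplicity exactly $d$ in $M$—the entire contribution comes from $\textbf{x}_G^d$, since $v\notin F'=\mathrm{supp}(N)$—so $v$ must lie in each of the $d$ factors $F'',H_1,\ldots,H_{d-1}$. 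Together with the cardinality constraint, this forces $F''=G\cup\{c\}$ and $H_i=G\cup\{c_i\}$ for uniquely determined $c,c_i\in[n]\setminus G$. Equating the remaining factors on both sides yields $x_c\cdot\prod_i x_{c_i}=N$, so the support of this $d$-element multiset is precisely $F'$; in particular $c\in F'$. Writing $c=i_r$ for the appropriate index $r\in\{1,\ldots,d\}$ concludes $\textbf{x}_{G\cup\{i_r\}}=\textbf{x}_{F''}\in J$, as required.

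The main obstacle to surmount is that the most natural candidate, $M=\textbf{x}_G^d\cdot\textbf{x}_F$, fails as soon as $G\cap F\neq\emptyset$: each vertex in $G\cap F$ then picks up multiplicity $d+1$, preventing $M$ from belonging to $(\fm^{[d]})^d$ at all. Replacing $\textbf{x}_F$ by the carefully chosen $N$ supported on $F\setminus G$ removes this overlap while keeping $\textbf{x}_F$ a divisor of $M$, and—crucially—leaves every $v\in G$ with multiplicity exactly $d$, which is just the right amount of rigidity to force $G$ into every factor of any decomposition of $M$ in $J(\fm^{[d]})^{d-1}$.
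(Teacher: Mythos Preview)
Your proof is correct and follows essentially the same strategy as the paper: build a degree-$d^2$ monomial in $J\cap(\fm^{[d]})^d$ whose variables from $G$ all occur with multiplicity exactly $d$, then use the hypothesis and a multiplicity count on the resulting factorization in $J(\fm^{[d]})^{d-1}$ to force one of the sets $G\cup\{i_r\}$ into $J$.

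The one substantive difference is the refinement you flag at the end. The paper works by contradiction and takes the monomial $\mathbf{x}_F\cdot\mathbf{x}_G^{\,d}$, exhibited as $\prod_{j=1}^d x_{i_j}\mathbf{x}_G$, to lie in $(\fm^{[d]})^d$. That factorization is only valid when $F\cap G=\varnothing$; if some $i_j\in G$ then $x_{i_j}$ has multiplicity $d+1$ in this monomial, which cannot occur in any product of $d$ square-free degree-$d$ monomials. Your replacement of $\mathbf{x}_F$ by a degree-$d$ monomial $N$ supported on $F\setminus G$ removes this obstruction while preserving both $\mathbf{x}_F\mid M$ and the crucial feature that each $v\in G$ has multiplicity exactly $d$. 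So your argument is a genuine strengthening: it covers the overlap case $F\cap G\neq\varnothing$ (which is nontrivial whenever $1\le |F\cap G|\le d-2$) that the paper's proof, as written, does not address. The direct framing, as opposed to the paper's contradiction, is cosmetic.
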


\begin{proof}
Assume that $x_{i_1} \cdots x_{i_d} \in \mathcal{G}(J)$ and $\textbf{\rm \textbf{x}}_G \in \mathcal{G}(\fm^{[d-1]})$ be monomials such that the assertion is not true. Then, by our assumption, we have
\[x_{i_1} \cdots x_{i_d} \cdot (\mathbf{x}_G)^d =\lcm (x_{i_1} \cdots x_{i_d}, x_{i_1}\mathbf{x}_G \cdot x_{i_2}\mathbf{x}_G \cdot \ldots \cdot x_{i_d}\mathbf{x}_G) \in J \cap (\fm^{[d]})^d = J(\fm^{[d]})^{d-1}.\]
Hence there exist $\varnothing \neq H \subseteq \{i_1, \ldots, i_d\}$ and $H' \subseteq G$ such that $|H \cup H'|=d$ and $\mathbf{x}_{H \cup H'} \in J$. Since $\mathbf{x}_{G \cup \{i_j\}} \notin J$, for all $1 \leq j \leq r$, one has $|H|>1$. Hence $|\{i_1, \ldots, i_d\} \setminus H|<d-1$ and 
\[\mathbf{x}_{\{i_1, \ldots, i_d\} \setminus H} \cdot \mathbf{x}_{G \setminus H'} \cdot (\mathbf{x}_G)^{d-1} \in (\fm^{[d]})^{d-1}.\]
This is impossible by the definition of $\fm^{[d]}$.
\end{proof}

\begin{rem}
Let $J \subseteq \mathbb{K}[x_1, \ldots, x_n]$ be a square-free monomial ideal generated in degree $d$ and $\fm=(x_1, \ldots, x_n)$. If $\VaVa_{J \subseteq \fm^{[d]}} =\{0\}$, then by Lemma~\ref{square-free power of maximal ideal}, for all $x_{i_1} \cdots x_{i_d} \in \mathcal{G}(J)$ and  for all $(d-1)$-subset $G \subseteq [n]$, there exists $1 \leq r \leq d$, such that $\textbf{\rm \textbf{x}}_{G \cup \{i_r\}} \in J$. \textit{Is the converse of this statement true?}

In the following, we show that the converse is true in the case $d=3$.
\end{rem}

\begin{prop}\label{square-free pairs}
Let $J \subseteq \mathbb{K}[x_1, \ldots, x_n]$ be a square-free monomial ideal generated in degree $3$ and $\fm=(x_1, \ldots, x_n)$. Then the followings are equivalent:
\begin{itemize}
\item[\rm (a)] $\VaVa_{J \subseteq \fm^{[3]}} =\{0\}$,
\item[\rm (b)] for all $x_{i_1} x_{i_2} x_{i_3} \in \mathcal{G}(J)$ and  for all $2$-subsets $G \subseteq [n]$, either $\textbf{\rm \textbf{x}}_{G \cup \{i_1\}} \in J$ or $\textbf{\rm \textbf{x}}_{G \cup \{i_2\}} \in J$ or $\textbf{\rm \textbf{x}}_{G \cup \{i_3\}} \in J$.
\end{itemize}
\end{prop}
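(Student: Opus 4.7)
\emph{(a) $\Rightarrow$ (b).} This is Lemma~\ref{square-free power of maximal ideal} specialized to $d = 3$: the vanishing of $\VaVa_{J \subseteq \fm^{[3]}}$ forces in particular $J \cap (\fm^{[3]})^3 = J(\fm^{[3]})^2$, and the lemma converts this equality into the combinatorial statement~(b).

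\emph{(b) $\Rightarrow$ (a).} The plan is to use Theorem~\ref{vvmon} as a reduction. Since $J$ is generated in degree $3$, $t_0(J) = 3$, and the theorem gives $\indeg(\VaVa_{J \subseteq \fm^{[3]}}) \leq 3$ whenever this module is nonzero; hence it suffices to prove $J \cap (\fm^{[3]})^t = J(\fm^{[3]})^{t-1}$ for $t = 2$ and $t = 3$ (the case $t = 1$ is tautological). I would examine a generator $m = \lcm(\mathbf{x}_F, \mathbf{x}_{F_1} \cdots \mathbf{x}_{F_t})$ of $J \cap (\fm^{[3]})^t$, with $\mathbf{x}_F \in \mathcal{G}(J)$ and each $F_i$ a $3$-subset of $[n]$, and try to produce $\mathbf{x}_H \in \mathcal{G}(J)$ together with $3$-subsets $G_1, \ldots, G_{t-1}$ such that $\mathbf{x}_H \mathbf{x}_{G_1} \cdots \mathbf{x}_{G_{t-1}}$ divides $m$.

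The natural first attempt is $H = F$. Writing $n(j) := |\{i : j \in F_i\}|$, this succeeds precisely when $m/\mathbf{x}_F \in (\fm^{[3]})^{t-1}$, which is controlled by the support $\{j \in F : n(j) \geq 2\} \cup \bigl((\bigcup_i F_i) \setminus F\bigr)$ of $m/\mathbf{x}_F$ and the associated multiplicities. A short enumeration singles out the ``bad'' configurations in which this support is too small or its multiplicities distribute adversely. In each such configuration one extracts a canonical $2$-subset $G \subseteq [n]$ encoding the obstruction; hypothesis (b) applied to $F$ and $G$ then produces $\mathbf{x}_H \in \mathcal{G}(J)$ with $H \subseteq F \cup G$, and a direct check shows that $m/\mathbf{x}_H$ now factors as $\mathbf{x}_{G_1} \cdots \mathbf{x}_{G_{t-1}}$ times a monomial.

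For $t = 2$, up to symmetry there are only two bad configurations: $|F_1 \cap F_2| = 1$ with $F \subseteq F_1 \cup F_2$ and $F \cap F_1 \cap F_2 = \varnothing$, and $|F_1 \cap F_2| = 2$ with $|F \cap F_1 \cap F_2| = 1$; in each, taking $G$ to be $(F_1 \cup F_2) \setminus F$ (respectively $F_1 \cap F_2$) makes (b) deliver the required $H$. The main obstacle is the $t = 3$ analysis: the most striking bad case is the fan $F_i = \{a, b, c_i\}$ with $F = \{c_1, c_2, c_3\}$, for which (b) with $G = \{a, b\}$ forces some $F_i \in J$ and the remaining two $F_j$'s serve as $G_1, G_2$; however, enumerating the intermediate configurations (with mixed overlap patterns of $F$ against $F_1, F_2, F_3$, and the corresponding multiplicity distributions) and verifying in each case that (b) combined with a suitable $G$ yields the required factorization is the technical bulk of the argument.
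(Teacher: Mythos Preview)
Your approach is essentially the paper's: Lemma~\ref{square-free power of maximal ideal} for (a)$\Rightarrow$(b), Theorem~\ref{vvmon} to reduce (b)$\Rightarrow$(a) to $t=2,3$, then a case analysis on the generators $\lcm(\mathbf{x}_F,\mathbf{x}_{F_1}\cdots\mathbf{x}_{F_t})$ in which hypothesis (b), applied to a well-chosen $2$-subset $G$, supplies the needed $H$. The paper organizes the case analysis by the size of $F\cap(F_1\cup\cdots\cup F_t)$ rather than by ``when does $H=F$ fail'', which is slightly more economical; in particular your list of bad $t=2$ configurations is not quite complete---you omit the case $|F_1\cap F_2|=2$, $F\cap F_1\cap F_2=\varnothing$, with $F$ meeting each of $F_1\setminus F_2$ and $F_2\setminus F_1$ (here $m/\mathbf{x}_F=x_a^2x_b^2$ with $\{a,b\}=F_1\cap F_2$)---though your own remedy $G=F_1\cap F_2$ disposes of it just as well.
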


\begin{proof}
(a) $\to$ (b): This implication follows from more general case in Lemma~\ref{square-free power of maximal ideal}.

(b) $\to$ (a): By virtue of Theorem~\ref{vvmon}, it is enough to show that $J \cap (\fm^{[3]})^t =J (\fm^{[3]})^{t-1}$, for $t=2,3$. Since $J \subseteq \fm^{[3]}$, we conclude that $J (\fm^{[3]})^{t-1} \subseteq J \cap (\fm^{[3]})^t$, for all $t$. So it is enough to show the other inclusion. Note that $J \cap (\fm^{[3]})^t$ is generated by $\lcm (\mathbf{x}_F, \mathbf{x}_{F_1} \cdots \mathbf{x}_{F_t})$, where $\mathbf{x}_F \in \mathcal{G}(J)$ and $F_1, \ldots, F_t$ are $3$-subsets of $[n]$. 

Let $F=\{i_1, i_2, i_3\}$, $F_1, \ldots, F_t$ be $3$-subsets of $[n]$ such that $\mathbf{x}_F \in \mathcal{G}(J)$ and $u = \lcm (\mathbf{x}_F, \mathbf{x}_{F_1} \cdots \mathbf{x}_{F_t})$.
If $F \cap \left( F_1 \cup \cdots \cup F_t \right) \subseteq F_i$ for some $1 \leq i \leq t$, then 
\[u= \mathbf{x}_F \cdot \frac{\mathbf{x}_{F_1} \cdot \ldots \cdot \mathbf{x}_{F_t}}{\mathbf{x}_{F_i}} \cdot \mathbf{x}_{F_i \setminus F \cap \left( F_1 \cup \cdots \cup F_t \right)} \in J(\fm^{[3]})^{t-1}.
\]
So assume that $F, F_1, \ldots, F_t$ do not satisfy in this condition. We consider the following cases:

\textbf{Case 1.} $|F \cap (F_1 \cup \cdots \cup F_t)|=2$.
\begin{itemize}
\item[] Since $F, F_1, \ldots, F_t$ do not satisfy in the above condition, without loss of generality, we may assume that $F \cap F_1=\{i_1\}$, $F \cap F_2=\{i_2\}$. Then 
\[u= \mathbf{x}_F \cdot \mathbf{x}_{F_1 \setminus \{i_1\}} \cdot \mathbf{x}_{F_2 \setminus \{i_2\}} \cdot \mathbf{x}_{F_3} \cdot \ldots \cdot \mathbf{x}_{F_t}. 
\]
By our assumption, there exists $1 \leq k\leq 3$, such that $\mathbf{x}_{(F_1 \setminus \{i_1\}) \cup \{i_k\}} \in J$. If $k \in \{1,2\}$, then 
\[u=\mathbf{x}_{(F_1 \setminus \{i_1\}) \cup \{i_k\}} \cdot \mathbf{x}_{(F_2 \setminus \{i_2\}) \cup \{i_3\}} \cdot \mathbf{x}_{F_3} \cdot \ldots \cdot \mathbf{x}_{F_t} \cdot \mathbf{x}_{F \setminus \{i_3,i_k\}}  \in J(\fm^{[3])^{t-1}}.
\]
Otherwise, $k=3$ and $u=\mathbf{x}_{(F_1 \setminus \{i_1\}) \cup \{i_k\}} \cdot \mathbf{x}_{F_2} \cdot \ldots \cdot \mathbf{x}_{F_t} \cdot \mathbf{x}_{F \setminus \{i_2,i_k\}} \in J(\fm^{[3])^{t-1}}$.
\end{itemize}

\textbf{Case 2.} $t=2$ and $|F \cap (F_1 \cup F_2)|=3$. 
\begin{itemize}
\item[] Again in this case, without loss of generality, we may assume that $F \cap F_1=\{i_1,i_2\}$ and $i_3 \in F_2$. Then $u=\mathbf{x}_F \cdot \mathbf{x}_{F_1 \setminus \{i_1,i_2\}} \cdot \mathbf{x}_{F_2 \setminus \{i_3\}}$. By our assumption, there exists $1 \leq k \leq 3$ such that $\mathbf{x}_{(F_3 \setminus \{i_3\}) \cup \{i_k\}} \in J$. Consequently,
\[u=\mathbf{x}_{(F_3 \setminus \{i_3\}) \cup \{i_k\}} \cdot \mathbf{x}_{(F_1 \setminus \{i_1, i_2\}) \cup F\setminus \{i_k\}} \in J\fm^{[3]}.\]
\end{itemize}

\textbf{Case 3.} $t=3$ and $|F \cap (F_1 \cup F_2 \cup F_3)|=3$.
\begin{itemize}
\item[] Using the assumption on $F, F_1, F_2, F_3$ and by skipping the symmetric cases, there are two possibilities to consider:
\begin{itemize}
\item[(1)] $F \cap F_1=\{i_1,i_2\}$ and $i_3 \in F_2$.\\
In this case, an argument similar to case (2) yields the required result.
\item[(2)] $F \cap F_k =\{i_k\}$, for $k=1,2,3$.\\
In this case, an argument similar to case (1) yields the required result.
\end{itemize}
\end{itemize}
\end{proof}

\section{The Valabrega-Valla module of an ideal}\label{c3}
Let $R = \mathbb{K}[x_1, \ldots, x_n]$ be the polynomial ring over a field $\mathbb{K}$, $J \subseteq R$ be a homogeneous ideal and $I \subseteq R$ be the Jacobian ideal of $J$, by which we always mean the ideal $\left( J , I_r(\Theta)\right)$ where $r = \mathrm{ht}(J)$
and $\Theta$ stands for the Jacobian matrix of a set of generators of $J$. More precisely, if $J = ( f_1, \ldots, f_s)$, then
\[
\Theta= \begin{bmatrix}
\frac{\partial f_1}{\partial x_1} & \frac{\partial f_1}{\partial x_2} &\cdots & \frac{\partial f_1}{\partial x_n}\\
\vdots & \vdots & & \vdots \\
\frac{\partial f_s}{\partial x_1} & \frac{\partial f_s}{\partial x_1} & \cdots & \frac{\partial f_s}{\partial x_n}
\end{bmatrix}.
\]
Note that $I_r(\Theta)$ is independent from the choice of generators of $J$. In the following, we consider the pair $J \subseteq \left( J , I_r(\Theta)\right)$ and we simply write $\VaVa_J$  instead of $\VaVa_{J \subseteq \left( J , I_r(\Theta)\right)}$.

\begin{ex}\mbox{}\label{Jacobmax}
\begin{itemize}
\item[(a)] Let $\mathbb{X}$ be a finite set of $r$ points in the projective space $\mathbb{P}_{\mathbb{K}}^{n-1}$ over an algebraically closed field $\mathbb{K}$. Denote by $J$ the defining ideal of $\mathbb{X}$. If $\VaVa_{J}\neq \{0\}$, then $\indeg(\VaVa_{J}) \leq r$~\cite[Proposition 1.3]{AZ}.  
\item[(b)] Let  $\fm^d$ be the $d^{\mathrm{th}}$-power of irrelevant  maximal ideal of $R$. The Jacobian matrix is of the form 
\[
\Theta(\fm^d)=\left[\begin{array}{l|ccccc}
dx_1^{d-1}&&&&&\\
(d-1)x_1^{d-2}x_2&&&&&\\
\vdots&&&\textbf{*}&&\\
x_n^{d-1}&&&&&\\
\hline
0&&&&&\\
\vdots&&&\Theta'&&\\
0&&&&&\\
\end{array}
\right]
,\]
where $\Theta'$ is the Jacobian matrix of ideal $(x_2,\ldots,x_n)^{d}$. We use induction on $n$, to show that
$I_{n}(\Theta)=\fm^{n(d-1)}$. Our induction hypothesis implies that, $I_{n-1}(\Theta')=(x_2,\ldots,x_n)^{(n-1)(d-1)}$. Changing the role of $x_1$ by $x_i$, we obtain  
$$\fm^{d-1}(x_1,\ldots,\hat{x}_i,\ldots,x_n)^{(n-1)(d-1)}\subseteq I_n(\Theta),$$ for $i=1,\ldots,n$. Hence 
\begin{align}\label{Abi}
I_n(\Theta)\supseteq \fm^{d-1}(\sum_{i=1}^n (x_1,\ldots,\hat{x_i},\ldots,x_n)^{(n-1)(d-1)}.
\end{align}
We claim that the latter ideal is equal to $\fm^{n(d-1)}$. Let $x_1^{\alpha_1}\cdots x_n^{\alpha_n}\in \mathcal{G}(\fm^{n(d-1)})$. Then $\sum \alpha_i=n(d-1)$. Hence there exists $j$ such that $\alpha_j\leq d-1$. For $k\neq j$ we choose $0 \leq \beta_k\leq \alpha_k$ such that $\alpha_j+\sum_{k\neq j}\beta_k=d-1$. Then we have 
\[x_1^{\alpha_1}\cdots x_n^{\alpha_n}=(x_j^{\alpha_j}x_1^{\beta_1}\cdots \hat{x}_j\cdots x_n^{\beta_n})\cdot(x_1^{\alpha_1-\beta_1}\cdots \hat{x}_j\cdots x_n^{\alpha_n-\beta_n}). \]
Note that $\sum_{k\neq j}(\alpha_k-\beta_k)=(n-1)(d-1)$. Hence $x_1^{\alpha_1}\cdots x_n^{\alpha_n}$ belongs to the right side of~\eqref{Abi}. Now from \cite[Example 2.19]{AA}, we conclude that  $\VaVa_{\mathfrak{m}^{d}} = \{0\}$.
%\item[(c)]$N\VaVa_{\mathfrak{m}^{[d]}} = 1$.
\end{itemize}
\end{ex}
Let $J=I(\mathcal{C})$ be the facet ideal of the $d$-uniform clutter $\mathcal{C}$. In the sequel,  we give a combinatorial criterion for which $\VaVa_{J}\neq \{0\}$ (Theorem~\ref{ttorsion}). As a consequence we recover~\cite[Theorem 3.3]{AR}.

\begin{defn}
Let $\C$ be a $d$-uniform clutter on $[n]$. A $(d-1)$-subset $e \subset [n]$ is called an \textit{submaximal circuit} of $\C$, if there exists $F \in \C$, such that $e \subset F$. The set of all submaximal circuits of $\C$ is denoted by ${\rm SC}(\C)$. For $e \in \mathrm{SC} \left( \C \right)$, the \textit{neighbourhood} of $e$, $N \left( e \right)$, is defined as follows:
\[
N \left( e \right) = \left\{ v \in [n] \colon \quad \{v\} \cup e \in \C \right\}.
\]
Also, for $e_1, \ldots, e_s \in \mathrm{SC} \left( \C \right)$, let $N \left( e_1, \ldots, e_s \right)= \cup_{i=1}^{s} N \left( e_i \right)$.
\end{defn}

\begin{defn}
Let $\C$ be a $d$-uniform clutter with vertex set $[n]$. A subset $ A \subset [n]$ is called \textit{independent}, if there is no circuit in $\C$ which is contained in $A$.
\end{defn}

Let $\mathcal{C}$ be a $d$-uniform clutter on vertex set $[n]$. For a subset $A \subset [n]$, let ${A \choose {d-1}}$ denotes the set of all $(d-1)$-subsets of $A$. Then we define:
\begin{align*}
\alpha(A) := \max \bigg\{ r \colon \quad \text{ there exist } & e_1, \ldots, e_s \in {\rm SC}\left( \mathcal{C} \right) \cap {A \choose {d-1}} \\
& \text{ such that } |N \left( e_1, \ldots, e_s \right)| = r \bigg\}.
\end{align*}

%\medskip
Let $M$ be an $m \times n$ matrix and $1 \leq r \leq \min\{m,n\}$ be an integer. A \textit{transversal} of length $r$ in $M$ or an \textit{$r$-transversal} of $M$ is a collection of $r$ entries of $M$ with different rows and columns. In other words, an $r$-transversal of $M$ is the entries of the main diagonal of an $r \times r$ sub-matrix of $M$ after suitable changes of columns and rows.

\begin{lem} \label{g in I so alpha is greater than height}
Let $\mathcal{C}$ be a $d$-uniform clutter on vertex set $[n]$ and $J = I\left( \mathcal{C} \right)$ its facet ideal. Let $r$ be a positive integer and $\Theta$ denotes the Jacobialn matrix of $J$. If $x_{i_1}^{\beta_1} \cdots x_{i_m}^{\beta_m} \in I_r \left( \Theta \right)$, then $\alpha \left( \left\{ {i_1}, \ldots, {i_m} \right\} \right) \geq r$.
\end{lem}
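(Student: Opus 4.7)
\bigskip

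The plan is to identify the monomials appearing in the expansions of $r$-minors with transversal data carrying the neighbourhood information we need, and then argue that every monomial in $I_r(\Theta)$ is divisible by one such transversal monomial.

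First, I would spell out $\Theta$ explicitly: its rows are indexed by facets $F \in \C$, its columns by the variables $x_1,\ldots,x_n$, and the entry in row $F$ and column $x_j$ is $\mathbf{x}_{F\setminus\{j\}}$ when $j\in F$ and zero otherwise. In particular, every nonzero entry is a monomial $\mathbf{x}_e$ with $e=F\setminus\{j\}\in{\rm SC}(\C)$, and by construction $j\in N(e)$. Next, unwind the expansion of a typical $r$-minor obtained by choosing rows $F_1,\ldots,F_r$ and columns $j_1,\ldots,j_r$: it equals $\sum_{\sigma\in S_r}\mathrm{sgn}(\sigma)\prod_{k=1}^r \mathbf{x}_{F_k\setminus\{j_{\sigma(k)}\}}$, where only those $\sigma$ with $j_{\sigma(k)}\in F_k$ for every $k$ contribute. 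I call each such nonzero product a \emph{transversal monomial} $T=\mathbf{x}_{e_1}\cdots\mathbf{x}_{e_r}$, with $e_k=F_k\setminus\{j_{\sigma(k)}\}\in{\rm SC}(\C)$. Since the $j_{\sigma(1)},\ldots,j_{\sigma(r)}$ are $r$ distinct elements and each $j_{\sigma(k)}\in N(e_k)\subseteq N(e_1,\ldots,e_r)$, we get $|N(e_1,\ldots,e_r)|\geq r$ automatically from the transversal structure.

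The key step is then the following divisibility claim: every monomial $M\in I_r(\Theta)$ is divisible by at least one transversal monomial $T$. To prove this, write $M=\sum_k f_k\mu_k$ with each $\mu_k$ an $r$-minor and $f_k\in R$, and expand both each $f_k$ and each $\mu_k$ as a $\mathbb{K}$-linear combination of monomials; every monomial occurring in the resulting expansion of $f_k\mu_k$ has the shape $m\cdot T$ where $m$ is a monomial of $f_k$ and $T$ is a transversal monomial of $\mu_k$. After collecting like terms, the coefficient of $M$ must equal $1$, so at least one contribution $c\,m\,T$ survives with $m\cdot T=M$; in particular $T\mid M$. This cancellation/bookkeeping step is the only subtle point, because $I_r(\Theta)$ is not in general a monomial ideal, but it is essentially formal once one fixes the basis of monomials.

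Finally, given $T=\mathbf{x}_{e_1}\cdots\mathbf{x}_{e_r}\mid M=x_{i_1}^{\beta_1}\cdots x_{i_m}^{\beta_m}$, every $e_k$ is contained in $\{i_1,\ldots,i_m\}$, so each $e_k\in{\rm SC}(\C)\cap\binom{\{i_1,\ldots,i_m\}}{d-1}$. Passing to the distinct members $e_{k_1},\ldots,e_{k_s}$ of $\{e_1,\ldots,e_r\}$ does not change the neighbourhood, so $|N(e_{k_1},\ldots,e_{k_s})|=|N(e_1,\ldots,e_r)|\geq r$. By the definition of $\alpha$, this yields $\alpha(\{i_1,\ldots,i_m\})\geq r$, completing the proof.
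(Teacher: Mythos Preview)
Your argument is correct, but it works harder than necessary and misses the structural fact the paper exploits. In this setting each $r\times r$ minor of $\Theta$ is itself a scalar multiple of a single monomial: if the chosen rows are $F_1,\ldots,F_r$ and columns $j_1,\ldots,j_r$, then every nonzero term $\prod_k\mathbf{x}_{F_k\setminus\{j_{\sigma(k)}\}}$ in the Leibniz expansion equals the same monomial $\bigl(\prod_k\mathbf{x}_{F_k}\bigr)/\bigl(\prod_k x_{j_k}\bigr)$, independent of $\sigma$. The paper invokes this (via \cite[Lemma~3.1]{AR}) to conclude that $I_r(\Theta)$ is already a monomial ideal, so any monomial in it is divisible by some nonzero minor, which \emph{is} a transversal product $\mathbf{x}_{e_1}\cdots\mathbf{x}_{e_r}$; the rest is identical to your final paragraph.

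Your route avoids this observation and instead uses the generic ``a monomial written as a $\mathbb{K}$-linear combination of monomials must equal one of them'' argument. That is valid and would survive even if the minors were not monomials, so it is a more robust proof. The cost is the cancellation bookkeeping you flagged, and the mistaken impression that $I_r(\Theta)$ might fail to be monomial here --- it does not, and recognizing that simplifies the proof considerably.
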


\begin{proof}
Let $A_{p \times p}$ be a square submatrix of $\Theta$. We can see from the proof of \cite[Lemma 3.1]{AR} that $\mathrm{det}(A)=\beta u_1 \cdots u_p$, where $u_1, \ldots, u_p$ is a $p$-transversal in $A$ and $\beta \in \mathbb{K}$. In particular, 
$$\left( \mathrm{det}(A) \colon \; A_{r \times r} \text{ is a square submatrix of } \Theta \right),$$
leads to a monomial generator for $I_r(\Theta)$ up to cancellation of scalar coefficient.

If $x_{i_1}^{\beta_1} \cdots x_{i_m}^{\beta_m} \in I_r \left( \Theta \right)$, then there exists a square submatrix $A_{r \times r}$ of $\Theta$ such that $\mathrm{det}(A) \mid x_{i_1}^{\beta_1} \cdots x_{i_m}^{\beta_m}$. By the above discussion, $\mathrm{det}(A)=\beta u_1 \cdots u_r$ where $u_1, \ldots, u_r$ is an $r$-transversal in $A$ and $\beta\in \mathbb{K}$. Thus each $u_j$ is of the form $\mathbf{x}_{e_j}$, where
\[
e_{j} \in \mathrm{SC} \left( \mathcal{C} \right ) \cap \binom{\left\{ {i_1}, \ldots, {i_m} \right\}}{d-1},
\]
for $j=1,\ldots, r$. Now, it is obvious that $N \left( e_{1}, \ldots, e_{r} \right) \geq r$. This completes the proof.
\end{proof}

%Let us to call an ideal $J\subset R$ \textit{Aluffi $t$-torsion}, if $J \cap I^t \neq JI^{t-1}$, where $I$ is the Jacobian  ideal of $J$

\begin{thm}\label{ttorsion}
Let $\mathcal{C}$ be a $d$-uniform clutter on vertex set $[n]$ and $J = I \left( \mathcal{C} \right)$ its facet ideal. Let $1 <t \leq d$ and $r$ be positive integers and $y_1, \ldots y_m$, ($m \geq d$), be distinct vertices of $\mathcal{C}$, such that:
\begin{itemize}
\item[\rm (i)] $F := \left\{ y_1, \ldots, y_d \right\} \in \mathcal{C}$;
\item[\rm (ii)] For any $(t-1)$-subset $B$ of $\{y_1 \ldots, y_t\}$, the set $B \cup \{y_{t+1}, \ldots, y_m \}$ is independent;
\item[\rm (iii)] $\alpha \left( y_{t+1}, \ldots, y_m\right) = r -1$.
\end{itemize}
Then $J \cap \left( J, I_r(\Theta) \right)^t \neq J \cdot \left( J, I_r(\Theta) \right)^{t-1}$. In particular under the above conditions, $\left( \VaVa_J \right)_t \neq \{0\}$.
\end{thm}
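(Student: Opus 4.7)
The plan is to produce an explicit monomial $u$ that lies in $J\cap(J,I_r(\Theta))^t$ but not in $J\cdot(J,I_r(\Theta))^{t-1}$, so that $u$ realizes a nonzero class in $(\VaVa_J)_t$. The construction combines the facet $F$ from condition (i) with an $(r-1)$-transversal of $\Theta$ supplied by condition (iii), producing a product of $t$ generators of $I_r(\Theta)$ that is visibly divisible by $\mathbf{x}_F$.

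Using (iii), pick $e_1,\dots,e_s\in\mathrm{SC}(\mathcal{C})\cap\binom{\{y_{t+1},\dots,y_m\}}{d-1}$ with $|N(e_1,\dots,e_s)|=r-1$, and for each $v\in N(e_1,\dots,e_s)$ choose $i(v)$ so that $e_{i(v)}\cup\{v\}\in\mathcal{C}$. With suitable care so that the rows $e_{i(v)}\cup\{v\}$ are pairwise distinct, the $r-1$ entries $\mathbf{x}_{e_{i(v)}}$ of $\Theta$ form an $(r-1)$-transversal, whose product $M_0:=\prod_v\mathbf{x}_{e_{i(v)}}\in I_{r-1}(\Theta)$ is supported in $\{y_{t+1},\dots,y_m\}$. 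Condition (ii) forces $N(e_i)\cap\{y_1,\dots,y_t\}=\varnothing$ for every $i$: any $y_k$ in the intersection would place the circuit $e_i\cup\{y_k\}$ inside the independent set $B\cup\{y_{t+1},\dots,y_m\}$, where $B\subseteq\{y_1,\dots,y_t\}$ is a $(t-1)$-subset containing $y_k$. Hence, for each $k\in[t]$, one may append the entry at row $F$ and column $y_k$, namely $\mathbf{x}_{F\setminus\{y_k\}}$: the row $F$ is new because $|F\cap\{y_{t+1},\dots,y_m\}|=d-t<d-1$, and the column $y_k$ is new because $y_k\notin N(e_1,\dots,e_s)$. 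This gives an $r$-transversal and correspondingly $M_k:=\mathbf{x}_{F\setminus\{y_k\}}\cdot M_0\in I_r(\Theta)$. Setting
\[
u:=M_1 M_2\cdots M_t \;=\; (y_1\cdots y_t)^{t-1}\,(y_{t+1}\cdots y_d)^{t}\cdot M_0^{\,t},
\]
we immediately have $u\in I_r(\Theta)^t\subseteq(J,I_r(\Theta))^t$, and because $t\ge 2$, the monomial $\mathbf{x}_F=y_1\cdots y_d$ divides $u$, so $u\in J$ as well.

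The hard step is proving $u\notin J\cdot(J,I_r(\Theta))^{t-1}$. Since $I=(J,I_r(\Theta))$ is a monomial ideal, this amounts to ruling out any divisor $\mathbf{x}_G\cdot m_1\cdots m_{t-1}$ of $u$ with $G\in\mathcal{C}$ and each $m_i$ either some $\mathbf{x}_{G_i}$ ($G_i\in\mathcal{C}$) or an $r$-transversal monomial of $\Theta$. Because $\mathrm{supp}(u)\subseteq\{y_1,\dots,y_m\}$, condition (ii) pins down that any circuit of $\mathcal{C}$ inside $\{y_1,\dots,y_m\}$ must contain $\{y_1,\dots,y_t\}$; hence $G$ and every $J$-factor $G_i$ contain $\{y_1,\dots,y_t\}$. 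Lemma~\ref{g in I so alpha is greater than height} combined with (iii) forces the support of every $I_r(\Theta)$-factor to meet $\{y_1,\dots,y_t\}$, for otherwise its support would lie in $\{y_{t+1},\dots,y_m\}$, where $\alpha<r$. A $y_k$-exponent count for each $k\in[t]$ (the exponent in $u$ is exactly $t-1$, with $\mathbf{x}_G$ consuming one and each of the $p$ $J$-factors consuming one more, while each of the $q=t-1-p$ $I_r(\Theta)$-factors contributes at least one unit to the total $y_k$-multiplicity summed over $k\in[t]$), followed by summation over $k$, yields the inequality $(p+1)(t-1)\le t(t-2)$; this is already absurd for $t=2$, which recovers one direction of \cite[Theorem~3.3]{AR}. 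The main obstacle is the refinement for $t\ge 3$: one must use condition (ii) to split the $(d-1)$-subsets appearing inside an $r$-transversal into those whose associated facet lies inside $\{y_1,\dots,y_m\}$---which by (ii) must then contain all of $\{y_1,\dots,y_t\}$, forcing a disproportionately large contribution to the $y_k$-exponents---versus those subordinated to a facet outside $\{y_1,\dots,y_m\}$ (unconstrained by (ii) but subject to transversal matching), and to combine this dichotomy with the explicit exponent profile of $u$ and the maximality in (iii) to sharpen the count into a contradiction.
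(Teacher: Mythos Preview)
Your construction of $u$ and the verification that $u\in J\cap(J,I_r(\Theta))^t$ coincide exactly with the paper's choice $g=\prod_{i=1}^{t}g_i$ where $g_i=\mathbf{x}_{F\setminus\{y_i\}}\cdot\mathbf{x}_{e_1}^{N_1}\cdots\mathbf{x}_{e_s}^{N_s}$.  The divergence is in the exclusion $g\notin J\cdot(J,I_r(\Theta))^{t-1}$, where your argument is admittedly unfinished for $t\geq 3$.

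The paper does \emph{not} attempt the aggregate exponent count you set up (summing $y_k$-multiplicities over $k\in[t]$ and over all factors simultaneously).  Instead it splits $J\cdot(J,I_r(\Theta))^{t-1}=\sum_{j=0}^{t-1}J^{\,t-j}\,I_r(\Theta)^{\,j}$ and treats $j=0$ and $j\geq 1$ separately.  For $j=0$ one uses, as you do, that any circuit inside $\{y_1,\dots,y_m\}$ must contain $\{y_1,\dots,y_t\}$, which would force $x_{y_1}^{\,t}\mid g$, impossible.  For $j\geq 1$ the paper isolates a \emph{single} factor: since each of the $t-j$ generators coming from $J$ already contributes one copy of every $x_{y_k}$, if in addition each of the $j$ generators $g'_1,\dots,g'_j\in\mathcal{G}(I_r(\Theta))$ were divisible by $x_{y_1}\cdots x_{y_t}$, the product would have $y_k$-exponent $\geq t$ for every $k$, contradicting that it is $t-1$ in $g$.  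Hence some $g'\in\mathcal{G}(I_r(\Theta))$ dividing $g$ satisfies $x_{y_1}\cdots x_{y_t}\nmid g'$.  The paper then invokes condition~(ii) to conclude $\mathrm{supp}(g')\subseteq A=\{y_{t+1},\dots,y_m\}$, after which Lemma~\ref{g in I so alpha is greater than height} gives $\alpha(A)\geq r$, contradicting (iii).

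So the step you are missing is precisely this isolation of one $I_r(\Theta)$-factor whose support avoids at least one of $y_1,\dots,y_t$, followed by the passage (via (ii)) to $\mathrm{supp}(g')\subseteq A$.  The dichotomy you sketch---splitting the $(d-1)$-subsets occurring in a transversal according to whether the associated facet stays inside $\{y_1,\dots,y_m\}$---is never needed in the paper; the single-factor argument replaces it.  That said, the implication ``$x_{y_1}\cdots x_{y_t}\nmid g'$ and (ii) $\Rightarrow\mathrm{supp}(g')\subseteq A$'' is asserted rather tersely in the paper: knowing only that $g'$ misses \emph{some} $y_k$ places $\mathrm{supp}(g')$ inside the independent set $(\{y_1,\dots,y_t\}\setminus\{y_k\})\cup A$, and one still has to argue why no $y_\ell$ with $\ell\neq k$ can occur in $\mathrm{supp}(g')$.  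You may want to examine that point carefully when reconstructing the argument.
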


\begin{proof}
Let $A := \{y_{t+1}, \ldots, y_m\}$. Since $\alpha (A) = r-1$, there exist $e_1, \ldots, e_s \in {\rm SC}(\mathcal{C}) \cap {A \choose {d-1}}$, such that $|N \left( e_1, \ldots, e_s \right)|=r-1$. Put
\begin{align*}
& A_1 := N \left( e_1 \right),\\
& A_i := N \left( e_i \right) \setminus N \left( e_1, \ldots, e_{i-1} \right), \text{ for } i>1.
\end{align*}
and $N_i := |A_i|$. Then, $A_i \cap A_j = \varnothing$ and without loss of generality, we may assume that $A_i \neq \varnothing$. This implies that, the elements
\begin{equation*}
\underbrace{{\rm \textbf{x}}_{e_1}, \ldots, {\rm \textbf{x}}_{e_1}}_{N_1 \text{ times}}, \; \ldots \; , \underbrace{{\rm \textbf{x}}_{e_s}, \ldots, {\rm \textbf{x}}_{e_s}}_{N_s \text{ times}}
\end{equation*}
form a $(r-1)$-transversal in $\Theta$. 

Now, for $i=1, \ldots, t$, take the monomials $g_i \in S$, as follows:
$$g_i = \frac{{\rm \textbf{x}}_F}{x_{y_i}} \cdot {\rm \textbf{x}}_{e_1}^{N_1} \cdots {\rm \textbf{x}}_{e_s}^{N_s}.$$
We claim that $g:= \prod_{i=1}^t g_i \in J \cap \left( J, \, I_r\left( \Theta \right) \right)^t \setminus J  \left(J, \, I_r\left( \Theta \right) \right)^{t-1}$.

By (i) it is clear that $g \in J$. Also, (ii) implies that, $N \left(e_1, \ldots, e_s \right) \subset [n] \setminus \{y_1, \ldots, y_t\}$. In particular, the elements
\begin{equation*}
\frac{{\rm \textbf{x}}_F}{x_{y_i}}, \; \underbrace{{\rm \textbf{x}}_{e_1}, \ldots, {\rm \textbf{x}}_{e_1}}_{N_1 \text{ times}}, \; \cdots, \; \underbrace{{\rm \textbf{x}}_{e_s}, \ldots, {\rm \textbf{x}}_{e_s}}_{N_s \text{ times}},
\end{equation*}
form an $r$-transversal in $\Theta$. Hence $g_i \in  I_r\left( \Theta \right)$, for $i= 1, \ldots t$. It remains to show that $g \notin J  \cdot \left( J , \, I_r\left( \Theta \right) \right)^{t-1}$.

To show this, first note that
$$J \cdot \left(J , \, I_r\left( \Theta \right) \right)^{t-1} = J^t + J^{t-1} \cdot I_r\left( \Theta \right) + \cdots + J \cdot I_r\left( \Theta \right)^{t-1}.$$
Being a monomial ideal, it suffices to show that $g \notin J^{t-j} \cdot I_r\left( \Theta \right)^{j}$, for $j=0, \ldots, t-1$. 

Let us show that  $g \notin J^t$. Otherwise, there exist $F_1, \ldots, F_t \in \C$, such that $ \textbf{x}_{F_1} \cdots \textbf{x}_{F_t} \mid g$. In particular $F_i \subseteq \mathrm{supp} (g) \subseteq \{ y_1, \ldots, y_m \}$, for $i=1, \ldots, t$. In this case, (ii) implies that $F_i \supseteq \{ y_1, \ldots, y_t \}$ which means that $x_{y_1}^t \cdots x_{y_t}^t \mid g$. This is impossible by our choice of $g$.

However, if $j \geq 1$ and $g \in J^{t-j} \cdot I_r\left( \Theta \right)^{j}$, then there exists $g' \in \mathcal{G} \left( I_r \left(\Theta \right) \right)$ such that $g' \mid g$ but $x_{y_1} \cdots x_{y_t} \nmid g'$. It follows from (ii) that $\mathrm{supp} (g') \subseteq A$. But lemma~\ref{g in I so alpha is greater than height} implies that $\alpha \left( \mathrm{supp} (g') \right) \geq r$ which contradicts to (iii). This completes the proof.

\end{proof}

As a direct consequence of Theorems~\ref{ttorsion}, we may recover one direction of \cite[Theorem 3.3]{AR}. For being self contained, we write a slightly shorter proof for other direction of \cite[Theorem 3.3]{AR} as well.

\begin{prop}[{\cite[Theorem 3.3]{AR}}]
Let $G$ be a graph and $\mathrm{ht}(I(G)) = r > 1$. Then the followings are equivalent:
\begin{itemize}
\item[\rm (a)] $\indeg(\VaVa_{I(G)})=2$
\item[\rm (b)] there are adjacent vertices $x_1$, $x_2$ and other vertices $x_{i_1},\ldots, x_{i_s}$, for some integer $s \geq 1$, such that
\begin{itemize}
\item [\rm (1)] both of the sets $\{x_1, x_{i_1},\ldots, x_{i_s} \}$ and $\{x_2, x_{i_1},\ldots, x_{i_s} \}$ are independent in $G$.
\item[\rm (2)] $|N(\{x_{i_1},\ldots, x_{i_s}\})|=r-1$.
\end{itemize}
\end{itemize}
\end{prop}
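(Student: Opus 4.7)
Proof proposal. The argument splits into two implications.

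For (b) $\Rightarrow$ (a), I would invoke Theorem~\ref{ttorsion} with $d = 2$, $t = 2$, $F = \{x_1, x_2\}$, $y_1 = x_1$, $y_2 = x_2$, and $y_{2+k} = x_{i_k}$ for $k = 1,\ldots,s$. Condition (i) is the assumed adjacency of $x_1, x_2$; condition (ii), specialized to the two $1$-subsets $\{x_1\}$ and $\{x_2\}$ of $\{y_1,y_2\}$, matches (1); condition (iii), reading $\alpha(x_{i_1}, \ldots, x_{i_s}) = r - 1$, specializes for $d = 2$ to (2), since submaximal circuits are single vertices and $\alpha$ measures the size of the combined neighborhood. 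The theorem gives $(\VaVa_{I(G)})_2 \neq \{0\}$, and since $(\VaVa_{I(G)})_1 = 0$ automatically, $\indeg(\VaVa_{I(G)}) = 2$.

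For (a) $\Rightarrow$ (b), I would pick a monomial witness $g$ of minimal total degree, namely $g \in I(G) \cap (I(G), I_r(\Theta))^2$ with $g \notin I(G) \cdot (I(G), I_r(\Theta))$. Expanding $(I(G), I_r(\Theta))^2 = I(G)^2 + I(G)\,I_r(\Theta) + I_r(\Theta)^2$ and $I(G) \cdot (I(G), I_r(\Theta)) = I(G)^2 + I(G)\,I_r(\Theta)$, and using that a monomial in a sum of monomial ideals lies in at least one summand, I would conclude $g \in I_r(\Theta)^2$ while $g \notin I(G)^2$ and $g \notin I(G)\,I_r(\Theta)$. Choose an edge $\{x_1, x_2\}$ of $G$ with $x_1 x_2 \mid g$, set $m := g/(x_1 x_2)$, and declare $\{x_{i_1}, \ldots, x_{i_s}\} := \mathrm{supp}(m)$; minimality of $g$ combined with $g \notin I(G)^2$ will ensure that $\{x_1, x_2\} \cap \mathrm{supp}(m) = \varnothing$.

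From $g \notin I(G)\,I_r(\Theta)$ one gets $m \notin I_r(\Theta)$, and from $g \notin I(G)^2$ one gets $m \notin I(G)$, so no edge of $G$ is contained in $\{x_{i_1}, \ldots, x_{i_s}\}$. To complete (1), I would rule out edges of the form $\{x_1, x_{i_k}\}$ (and symmetrically $\{x_2, x_{i_k}\}$) as follows: writing $g = T_1 T_2 \cdot w$ with $T_1, T_2$ two $r$-transversal monomials of $\Theta$ dividing $g$, I would perform a column-swap in one of the $T_j$, exchanging the neighbor $x_2$ (appearing through the row $\{x_1, x_2\}$) for the neighbor $x_{i_k}$ (appearing through the row $\{x_1, x_{i_k}\}$). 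This produces a new $r$-transversal dividing $g/(x_1 x_{i_k})$, placing $g$ into $I(G) \cdot I_r(\Theta)$, a contradiction. For (2), removing the $\{x_1, x_2\}$-contribution from $T_1$ yields an $(r-1)$-transversal supported on $\{x_{i_1}, \ldots, x_{i_s}\}$, whence $|N(\{x_{i_1}, \ldots, x_{i_s}\})| \geq r - 1$ by Lemma~\ref{g in I so alpha is greater than height}; the reverse inequality follows from $m \notin I_r(\Theta)$, since the multiplicities of $m$ inherited from $T_1 T_2$ are large enough to host any $r$-transversal on $\mathrm{supp}(m)$, so $|N| \geq r$ would realize such a transversal inside $m$.

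The main obstacle will be the column-swap step in the proof of (1), which requires careful bookkeeping of which variables in $T_1, T_2$ play the role of rows (submaximal circuits) versus columns (neighbors) and verifying that the swap preserves disjointness of chosen columns across the transversal. The proof is \textbf{slightly shorter} than the original in [AR] because it works directly with the transversal decomposition of a single minimal witness $g$, rather than introducing auxiliary combinatorial constructions.
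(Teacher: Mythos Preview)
Your (b) $\Rightarrow$ (a) direction via Theorem~\ref{ttorsion} is correct and coincides with the paper's argument. The gap is in (a) $\Rightarrow$ (b), and it lies precisely where you flagged difficulty: your choice $\{x_{i_1},\ldots,x_{i_s}\}=\mathrm{supp}(m)$ with $m=g/(x_1x_2)$ need not satisfy condition~(1). Writing $g=g_1g_2$ with $g_i\in I_r(\Theta)\setminus J$ and $x_i\mid g_i$, the support of $m$ is $\mathrm{supp}(g_1/x_1)\cup\mathrm{supp}(g_2/x_2)$. One can show $\mathrm{supp}(m)$ is independent and that \emph{at least one} of $x_1,x_2$ has no neighbour in $\mathrm{supp}(m)$; but it can happen that $x_1$ is adjacent to some $x_{j_\ell}\in\mathrm{supp}(g_2/x_2)$ while $x_2$ has no neighbour in $\mathrm{supp}(g_1/x_1)$. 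In that situation $\{x_1\}\cup\mathrm{supp}(m)$ is \emph{not} independent, and one checks that $g/(x_1x_{j_\ell})=(g_1/x_1)(g_2/x_{j_\ell})$ is supported on an independent set and need not lie in $I_r(\Theta)$, so no contradiction with $g\notin J\cdot(J,I_r(\Theta))$ arises. Your column-swap does not rescue this: you assume the variable $x_2$ occurs in one of the transversals $T_1,T_2$ \emph{through the row $\{x_1,x_2\}$}, i.e.\ via column $x_1$; but since $x_1\mid T_1$ and $\mathrm{supp}(T_1)$ is independent, column $x_1$ cannot appear in the transversal for $T_1$ (its entry would be a neighbour of $x_1$ lying in $\mathrm{supp}(T_1)$), and the same holds for $T_2$. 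There is no reason the row $\{x_1,x_2\}$ is used at all.

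The paper avoids this by \emph{not} taking the combined support: it sets $\{x_{i_1},\ldots,x_{i_s}\}=\mathrm{supp}(g_1/x_1)$, the support of a \emph{single} factor, after first arguing that WLOG $x_2$ has no neighbour there. Your verification of~(2) then goes through verbatim for this smaller set (indeed the paper's bound $|N(C)|<r$ is obtained for every $C\subseteq\mathrm{supp}(m)$, using that $m=(g_1/x_1)(g_2/x_2)\notin I_r(\Theta)$, exactly as you argue). So the fix is to replace $\mathrm{supp}(m)$ by $\mathrm{supp}(g_1/x_1)$ and drop the column-swap entirely.
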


\begin{proof}
Let $J=I(G)$ be the edge ideal of $G$. We note that the ideal $I_r(\Theta)$ is a monomial ideal, where $\Theta$ is the Jacobian matrix of $J$ \cite[Lemma 3.1]{AR}.

(a) $\to$ (b): Since $\indeg(\VaVa_{I(G)})=2$, it follows that  $J \cap \left( J, I_r(\Theta) \right)^2 \nsubseteq J \cdot \left( J, I_r(\Theta) \right)$. Pick a monomial $g \in J \cap \left( J, I_r(\Theta) \right)^2 \setminus J \cdot \left( J, I_r(\Theta) \right)$. Then $g=g_1g_2$ where $g_i$ is a monomial in $\left( J, I_r(\Theta) \right)$. If $g_i \in J$ for some $i=1,2$ then $g=g_1g_2 \in J \cdot \left( J, I_r(\Theta) \right)$, which is a contradiction. Hence $g_i \in I_r(\Theta) \setminus J$. However, $g \in J$ which implies that there are adjacent vertices $x_1,x_2$ in $G$ such that $x_1x_2 \mid g$. Since $g_i \notin J$, we conclude, without loss of generality, that $x_i \mid g_i$, for $i=1,2$. Write $g_1=x_1 x_{i_1}^{\alpha_{i_1}} \cdots x_{i_s}^{\alpha_{i_s}}$ and $g_2=x_2 x_{j_1}^{\beta_{j_1}} \cdots x_{j_t}^{\beta_{j_t}}$, where $\sum_k \alpha_{i_k}=\sum_k \beta_{j_k} =r-1$. Then the sets $A=\{x_1, x_{i_1}, \ldots, x_{i_s} \}$ and $B=\{x_2, x_{j_1}, \ldots, x_{j_t} \}$ are independent, because $g_i \notin J$, for $i=1,2$. If $x_1$ is adjacent to some vertex in $B \setminus \{x_2\}$ and simultaneously $x_2$ is adjacent to some vertex in $A \setminus \{x_1\}$, then $g \in J \cdot \left( J, I_r(\Theta) \right)$ which is a contradiction. Assume that $x_2$ is not adjacent to any vertex in $A \setminus \{x_1\}$. Then clearly the adjacent vertices $x_1, x_2$ together with $\{ x_{i_1}, \ldots, x_{i_s} \}$ satisfy in (1). Lemma~\ref{g in I so alpha is greater than height} implies that $N(\{x_{i_1}, \ldots,x_{i_s}\}) \geq r-1$, for $x_{i_1}^{\alpha_{i_1}} \cdots x_{i_s}^{\alpha_{i_s}} \in I_{r-1}(\Theta)$ by \cite[Lemma 3.2]{AR}. On the other hand, $x_{i_1}^{\alpha_{i_1}} \cdots x_{i_s}^{\alpha_{i_s}} \cdot x_{j_1}^{\beta_{j_1}} \cdots x_{j_t}^{\beta_{j_t}} \notin I_r(\Theta)$, this means that for any subset $C$ of $\{x_{i_1}, \ldots,x_{i_s}, x_{j_1}, \ldots,x_{j_t}\}$, $|N(C)| < r$ (c.f. \cite[Lemma 3.2]{AR}). Thus $N(\{x_{i_1}, \ldots,x_{i_s}\})=r-1$, as required.

(b) $\to$ (a): This implication follows from Theorem~\ref{ttorsion} in special case $d=2$.
\end{proof}

\section{application: the rees algebra of $I/J$ when $\VaVa_{J\subseteq I}=\{0\}$}\label{Equ.Rees}
Let $J\subseteq I\subseteq R$ be ideals in a Noetherian ring $R$. We have seen that $\VaVa_{J\subseteq I}=\{0\}$ if and only if  the Aluffi algebra of $I/J$ is isomorphic with the corresponding Rees algebra.
By~\cite[Lemma 1.2]{AA}, the Aluffi algebra has the following presentation:
\begin{equation}\label{pre.Aluffi}
\mathcal{A}_{R/J}(I/J)\simeq \dfrac{\mathcal{R}_R(I)}{(J,\tilde{J})\mathcal{R}_R(I)},
\end{equation}
where $J$ is in degree zero and $\tilde{J}$ is in degree one in $\mathcal{R}_R(I)$.  
 Then to describe the defining ideal of the Rees algebra of $I/J$, we need just to find the defining ideal of the Rees algebra of $I$. 
 In this section, we find explicit equation for the defining ideal of the Rees algebra of $I/J$ when  $I$ is a monomial ideal in the ring $R=\mathbb{K}[\mathbf{x}]$ and $\VaVa_{J\subseteq I}=\{0\}$.

Let $I$ be a monomial ideal in a polynomial ring $R$ and $\mathcal{G}(I)=\{f_1,\ldots,f_m\}$. Denote by $I_s$ the set of all non-decreasing sequences of integers $\alpha=(i_1,\ldots, i_s) \subseteq   \{1,\ldots,m\}$. Then $f_{\alpha}=f_{i_1}\ldots f_{i_s}$ is the corresponding product of monomials in $I$. Let $T_{\alpha}=T_{i_1}\cdots T_{i_s}$ be the corresponding product of $T_i$ in $S=R[T_1,\ldots, T_m]$.  For every $\alpha,\beta\in I_s$ we consider the binomial 
$$T_{\alpha,\beta}=\frac{f_{\beta}}{\gcd(f_{\alpha},f_{\beta})}T_{\alpha} -\frac{f_{\alpha}}{\gcd(f_{\alpha},f_{\beta})}T_{\beta}.$$
By a result  in \cite{taylor} on the defining ideal of the Rees algebra of a monomial ideal, we have 
 \[{\mathcal R}_R(I)\simeq \frac{R[T_1,\cdots,T_m]}{(I_1({\mathbf T}.\phi),  \bigcup_{s=2}^{\infty} P_s)},\] 
where  $ I_1({\mathbf T}\cdot\phi)$ is the ideal generated by one minors of the product of matrix $\mathbf{T}=[T_1\ T_2\ \ldots \ \ T_m]$  by the first syzygy matrix $\phi$ of $I$ and $P_s=(\{T_{\alpha,\beta}\colon\ \alpha,\beta\in I_s \})$. Note that  $ I_1({\mathbf T}\cdot\phi)$ is the defining ideal of the symmetric algebra of $I$. Thus by~\eqref{pre.Aluffi}, we obtain the following presentation:
 \[{\mathcal A}_{R/J}(I/J)\simeq \frac{R[T_1,\ldots,T_n]}{\left (J,{\tilde J}, I_1({\mathbf T}\cdot\phi),  \bigcup_{s=2}^{\infty} P_s\right )}.\]  
\begin{ex}
Let $R=\mathbb{K}[x_1,\ldots, x_n]$ and  $I=\fm^d$ be the $d^{\mathrm{th}}$ power of the irrelevant maximal ideal of $R$ ordered by  lexicographic order with $x_1>x_2>\cdots> x_n$. Let 
$$\phi\colon R[T_1,\ldots,T_N] \surjects \mathcal{R}_R(I)$$ be the $R$-algebra homomorphism taking $T_i$ to the $i$th monomial of degree $d$ in $x_1,\ldots,x_n$ in lexicographic order where $N=\binom{d+n-1}{n-1}$. The kernel of $\phi$ is the defining ideal $\mathcal{J}$ of the Rees Algebra of $\fm^d$. Write $m_1,\ldots, m_r$ for the monomials of degree $d-1$ in $x_1,\ldots,x_n$  in lexicographic order where $r=\binom{d+n-2}{n-1}$. Let $\mathbf{M}$ be a matrix of size $n\times r$ whose $(i,j)$th entry is the variable $T_k$ such that $\phi(T_k)=x_im_j$. Let $\mathbf{X}$ be the variable matrix of size $n\times 1$.  Denote by $\mathcal{Q}=[\mathbf{X}\ | \ \mathbf{M}]$ the  concatenation of $\mathbf{X}$ and  $\mathbf{M}$. By \cite[Theorem 4]{Barshay}, we have $\mathcal{J}=I_2(\mathcal{Q})$, the $2\times 2$ minors of $\mathcal{Q}$. Note that generators of $I_2(\mathcal{Q})$ involving  the  variable column $\mathbf{X}$ is the defining ideal of the symmetric algebra of $I$.  

Now let $J\subseteq \fm^d$ be an ideal generated by some $d$-forms in $R$.  By Proposition~\ref{dpower}(b), $\VaVa_{J\subseteq \fm^d}=\{0\}$ and 
\[\mathcal{R}_{R/J}(\fm^d/J)\simeq R[T_1,\ldots,T_N]/(J,\tilde{J},I_2(\mathcal{Q})). \]
\end{ex}

Let $R$ be a standard graded ring with irrelevant maximal ideal $\fm$  and $I\subset R$ an ideal, the \textit{special fiber} of $I$ is defined to be $\mathcal{F}(I)=\mathrm{gr}_I(R)\otimes R/\mathfrak{m}$, where $\mathrm{gr}_I(R)=\mathcal{R}_R(I)/I\mathcal{R}_R(I)=\bigoplus_{i\geq 0}I^i/I^{i+1}$. In the case that $R=\mathbb{K}[x_1, \ldots, x_n]$  and $I=(f_1,\ldots,f_m)$ a homogeneous ideal, the special fiber $\mathcal{F}(I)$ is isomorphic to $\mathbb{K}[f_1,\ldots, f_m]$. Then there is a homomorphism $\varPsi \colon \mathbb{K}[T_1,\ldots,T_m] \surjects \mathcal{F}(I)$ that maps
$T_i$ to $f_i$. Set $\mathcal{H}=\ker \varPsi$. The ideal $I$ is called of \textit{fiber type} if $\mathcal{J}=S\mathcal{J}_1+S\mathcal{H}$, where $\mathcal{J}_1$ is the degree one homogeneous part of the defining ideal of the Rees algebra of $I$  and $S=R[T_1, \ldots, T_m]= \mathbb{K}[x_1, \ldots, x_n, T_1, \ldots, T_m]$.

Let $G$ be a simple graph on the vertex set $[n]$  and $I(G)$ the edge ideal of $G$. Let $w=\{v_0,v_1,\ldots,v_r=v_0\}$ be an even closed walk in $G$ and $
f_i=x_{v_{i-1}}x_{v_i}$. Since $f_1f_3\cdots f_{r-1}=f_2f_4\cdots f_{r}$, it follows that the binomial $T_{w}=T_1T_3\cdots T_{r-1}-T_2T_4\cdots T_{r}$ belongs to the defining ideal of the $\mathbb{K}$-algebra $\mathbb{K}[I(G)]$. 
Set 
\begin{align*}
& P(G)=\left( \{T_w\colon \ w\  \mbox{is an even closed walk in } G\} \right), \text{ and }\\ 
& P'(G)=\left( \{T_w\colon w\ \mbox{is an even cycle in}\ G\} \right).
\end{align*}
\begin{prop}\label{Pres. Rees}
Let $J\subseteq I$ be ideals in the ring $R=\mathbb{K}[\mathbf{x}]$ such that $I=I(G)$ is the edge ideal of a simple graph $G$  and $\VaVa_{J\subseteq I}=\{0\}$. Then 
\[\mathcal{R}_{R/J}(I/J)\simeq \frac{R[T_e\colon e\in E(G) ]}{(J,\tilde{J},\mathcal{J}_1,P(G))}.\]
Moreover, if $G$ is a bipartite graph, then
\[\mathcal{R}_{R/J}(I/J)\simeq \frac{R[T_e\colon e\in E(G) ]}{(J,\tilde{J},\mathcal{J}_1,P'(G))}.\] 
\end{prop}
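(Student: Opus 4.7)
The plan is to combine three ingredients: (a) the vanishing hypothesis on $\VaVa_{J\subseteq I}$, which by the discussion following~\eqref{Aluf-Rees} identifies $\mathcal{R}_{R/J}(I/J)$ with the Aluffi algebra $\mathcal{A}_{R/J}(I/J)$; (b) the presentation~\eqref{pre.Aluffi} of the Aluffi algebra, which reduces the problem to describing the defining ideal of the Rees algebra $\mathcal{R}_R(I)$ and then quotienting by $(J,\tilde{J})$; and (c) the classical structural fact that the edge ideal $I(G)$ is of fiber type.

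More precisely, first I would observe that, since $\VaVa_{J\subseteq I}=\{0\}$, the surjection in~\eqref{Aluf-Rees} is an isomorphism, so
\[
\mathcal{R}_{R/J}(I/J)\simeq \mathcal{A}_{R/J}(I/J)\simeq \frac{\mathcal{R}_R(I)}{(J,\tilde{J})\mathcal{R}_R(I)}.
\]
Then I would invoke the fiber-type property of the edge ideal $I(G)$: the defining ideal $\mathcal{J}$ of $\mathcal{R}_R(I(G))$ decomposes as $\mathcal{J}=S\mathcal{J}_1+S\mathcal{H}$, where $\mathcal{J}_1$ is the degree-one part (the linear syzygetic relations on the $T_e$) and $\mathcal{H}$ is the toric ideal (kernel of $\varPsi\colon \mathbb{K}[T_e]\twoheadrightarrow \mathbb{K}[I(G)]$). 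This gives
\[
\mathcal{R}_R(I(G))\simeq \frac{R[T_e\colon e\in E(G)]}{(\mathcal{J}_1,\mathcal{H})},
\]
and, pushing the quotient by $(J,\tilde{J})$ through the presentation of the Aluffi algebra, yields
\[
\mathcal{R}_{R/J}(I/J)\simeq \frac{R[T_e\colon e\in E(G)]}{(J,\tilde{J},\mathcal{J}_1,\mathcal{H})}.
\]

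To finish, I need to identify $\mathcal{H}$ combinatorially. For every even closed walk $w$ in $G$, the binomial $T_w$ clearly lies in $\mathcal{H}$, so $P(G)\subseteq \mathcal{H}$. The converse, $\mathcal{H}=P(G)$ for an arbitrary simple graph and $\mathcal{H}=P'(G)$ when $G$ is bipartite, is the classical description of the toric ideal of the edge subring (see e.g.\ Villarreal's book on monomial algebras, Chapter 8): every binomial generator of $\mathcal{H}$ factors through a monomial walk in $G$, and in the bipartite case every even closed walk is a product, modulo linear syzygies, of even cycle binomials because the lack of odd cycles forces the walk to decompose. Substituting $\mathcal{H}$ by $P(G)$ (respectively $P'(G)$) in the presentation above yields the two claimed isomorphisms.

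The main obstacle, in my view, is not the first two ingredients—both are direct citations of earlier material in the paper—but verifying the combinatorial identification $\mathcal{H}=P(G)$ (and its refinement in the bipartite case). I would not reprove it here; rather, I would cite the standard source and concentrate the proof on assembling the three isomorphisms in the correct order, being careful that $J$ sits in degree zero and $\tilde{J}$ in degree one of $\mathcal{R}_R(I)$ exactly as stated in~\eqref{pre.Aluffi}, so that the quotient we form is indeed the presentation of $\mathcal{R}_{R/J}(I/J)$ and not of some larger or smaller algebra.
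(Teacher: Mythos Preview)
Your proposal is correct and follows essentially the same approach as the paper: the paper invokes \cite[Theorem~3.1]{RV1} to get $\mathcal{R}_R(I(G))\simeq R[T_e]/(\mathcal{J}_1,P(G))$ (and the $P'(G)$ variant for bipartite $G$) directly, then applies~\eqref{pre.Aluffi} together with the vanishing of $\VaVa_{J\subseteq I}$, which is exactly your ingredients (a)--(c) in a slightly more compressed form. The only cosmetic difference is that the paper packages the fiber-type statement and the identification $\mathcal{H}=P(G)$ into a single citation of Villarreal, whereas you separate them into two steps.
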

\begin{proof}
By~\cite[Theorem 3.1]{RV1}, the ideal $I(G)$ is of fiber type
and $$\mathcal{R}_R(I)\simeq R[T_e\colon e\in E(G)]/(\mathcal{J}_1,P(G)),$$ 
where $\mathcal{J}_1$ is the defining ideal of the symmetric algebra of $I(G)$. Moreover, in the case that $G$ is a bipartite graph $$\mathcal{R}_R(I)\simeq R[T_e\colon e\in E(G)]/(\mathcal{J}_1,P'(G)). $$ 
Therefore, by~\eqref{pre.Aluffi} and the fact that the Aluffi algebra is isomorphic with the Rees algebra, we get the required presentations. 
\end{proof}
\begin{ex}
Let $I=I(C_6)+J$, where $J=(x_7x_9,x_8x_9)$. By  Proposition~\ref{graph vv}, $\VaVa_{J\subseteq I}=\{0\}$. The defining ideal of the Rees algebra of $I/J$ contains the form $T_1T_3T_5-T_2T_4T_6$  corresponding to the cycle $C_6$ as a minimal generator by  Proposition~\ref{Pres. Rees}. Hence $\mathrm{rt}(I/J)=3$ while $t_0(J)=2$.
\end{ex}

\begin{nota}
Let $\C$ to be a complete $d$-partite $d$-uniform clutter with the $d$-partition $\{V_i \colon \; i\in [d]\}$ and $e\in \C$. Consider the
ring homomorphism
$$\phi_{e}\colon S= R[\{T_{e'}\colon \; e\neq e'\in \C\}]\surjects S_{e}$$ that sends  $T_{e'}$ to $\frac{\mathbf{x}_{e'}}{\mathbf{x}_{e}}$. Set
$J_{e}=\ker \phi_{e}$. Moreover, for $e\neq e'\in \C$ we fix a vertex $v(e,e')\in e\setminus e'$ such that $v(e,e')$ and $v(e',e)$ lie in the same partition and by $v_e(j)$ we mean the only vertex of $e$ in the same partition as the vertex $j$. Finally we denote the circuit of $\C$ obtained from $e$ by replacing $j$ instead of $v_e(j)$ by $e(j)$.
\end{nota}

Assume that $j= v(e,e')\in V_i$. Then since $j'=v(e',e)$ is in $V_i\cap e'$ we have $j'=v_{e'}(j)$ and similarly $j=v_e(j')$. In this case, $e(j')$ and $e'(j)$ are the circuits obtained from $e$ and $e'$ respectively, by ``\textit{swapping}'' those vertices of $e$ and $e'$ which lie in the $i$'th partition. For example, $e(j')= (e\cup\{j'\}) \setminus \{v_e(j')\}= (e\cup\{j'\}) \setminus \{j\}$.
\begin{prop}
Let $\mathcal{C}$ be complete $d$-partite $d$-uniform clutter on vertex set $[n]$ and $\mathcal{C}' \subseteq \mathcal{C}$. Then 
\[\mathcal{R}_{R/I(\mathcal{C}')}(I(\mathcal{C})/I(\mathcal{C}'))\simeq  \frac{R \left[ T_e\colon \;\ e\in \C \right]}{(I(\mathcal{C}'), \mathcal{A})+(T_e \colon \; e \in \mathcal{C}')}, \]
where $\mathcal{A}$ is generated by the set of all binomials of the form $T_ex_i- x_rT_{e(i)}$ with $e\in \C$, $i\in [n]\setminus e$ and $r=v_{e}(i)$ together with those of the form $T_eT_{e'}-T_{e(j')}T_{e'(j)}$ where $j= v(e,e')$ and $j'=v(e',e)$, for $e\neq e'\in \C$ with $|e'\setminus e|>1$. In particular, the relation type number of $I(\mathcal{C})/I(\mathcal{C}')$ is at most $2$. 
\end{prop}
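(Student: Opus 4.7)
The plan is to combine Theorem~\ref{d-uni-d-part} with the presentation~\eqref{pre.Aluffi} of the Aluffi algebra. Since $\VaVa_{I(\mathcal{C}') \subseteq I(\mathcal{C})} = \{0\}$, the natural surjection in~\eqref{Aluf-Rees} is an isomorphism, so
\[
\mathcal{R}_{R/I(\mathcal{C}')}\bigl(I(\mathcal{C})/I(\mathcal{C}')\bigr) \simeq \mathcal{R}_R(I(\mathcal{C}))\big/(I(\mathcal{C}'),\,\widetilde{I(\mathcal{C}')})\mathcal{R}_R(I(\mathcal{C})).
\]
Because every generator of $I(\mathcal{C}')$ is already a generator of $I(\mathcal{C})$, the degree-one lift $\widetilde{I(\mathcal{C}')}$ is exactly the ideal $(T_e \colon e \in \mathcal{C}')$. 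Thus the problem reduces to producing an explicit presentation of $\mathcal{R}_R(I(\mathcal{C}))$ and proving that its defining ideal coincides with $\mathcal{A}$.

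Next, I would apply the Taylor-type presentation $\mathcal{R}_R(I) \simeq R[\mathbf{T}]/\bigl(I_1(\mathbf{T}\cdot\phi) + \bigcup_{s\geq 2} P_s\bigr)$ recalled right before the proposition, and match each ingredient with a piece of $\mathcal{A}$. For the linear part, the identity $\mathbf{x}_e \cdot x_i = \mathbf{x}_{e(i)} \cdot x_r$ (with $r = v_e(i)$) yields precisely the relation $T_e x_i - x_r T_{e(i)}$; the key claim is that such single-vertex-swap syzygies already generate all of $I_1(\mathbf{T}\cdot \phi)$. This follows by a standard argument: any syzygy between monomial generators $\mathbf{x}_e$ and $\mathbf{x}_{e'}$ of $I(\mathcal{C})$ can be obtained by transforming $e$ into $e'$ through a sequence of swaps, each swap producing one of the given linear binomials. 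The $d$-partite structure is essential here, because any two circuits $e, e' \in \mathcal{C}$ are connected by at most $d$ such single-partition swaps.

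For the quadratic part, I would establish that $I(\mathcal{C})$ is of fiber type and identify the toric ideal of the special fiber. Since the circuits of $\mathcal{C}$ correspond bijectively to $V_1 \times \cdots \times V_d$, the subring $\mathbb{K}[\mathbf{x}_e \colon e \in \mathcal{C}] \subseteq R$ is the $d$-fold Segre product of $\mathbb{K}[V_1], \ldots, \mathbb{K}[V_d]$, whose toric ideal is generated in degree two by the binomials $T_e T_{e'} - T_{e(j')} T_{e'(j)}$ arising from swapping the $i$-th partition coordinates of $e$ and $e'$ (where $j = v(e,e') \in V_i$ and $j' = v(e',e) \in V_i$). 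Induction on $s$ then propagates this to all $P_s$: given an arbitrary binomial $T_{\alpha,\beta} \in P_s$, one picks circuits $e$ appearing in $\alpha$ and $e'$ appearing in $\beta$ with $|e' \setminus e| > 1$, applies a quadratic swap to move a matched vertex into the gcd, and recurses. The restriction $|e' \setminus e|>1$ in the statement of $\mathcal{A}$ is exactly what ensures the swap is nontrivial. Combining the linear and quadratic pieces gives $\mathcal{A}$, and the relation type bound $\mathrm{rt}(I(\mathcal{C})/I(\mathcal{C}')) \le 2$ is immediate because every generator of $\mathcal{A}$ has $\mathbf{T}$-degree at most $2$.

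The main obstacle is the inductive reduction of higher $P_s$ to the quadratic swap relations: one must carefully choose the pair $(e,e')$ in each inductive step so that after a swap the new $\alpha', \beta'$ really do have a larger common factor, and keep track of how the resulting correction term sits in $R[\mathbf{T}]$ modulo the linear relations. This is a combinatorial bookkeeping argument that hinges entirely on the completeness of the $d$-partite $d$-uniform clutter, which guarantees that the swapped set $e(j')$ is always again a circuit of $\mathcal{C}$.
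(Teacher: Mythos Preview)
Your overall strategy is correct and matches the paper's in its first step: use Theorem~\ref{d-uni-d-part} to get $\VaVa_{I(\mathcal{C}')\subseteq I(\mathcal{C})}=\{0\}$, hence the Aluffi and Rees algebras of $I(\mathcal{C})/I(\mathcal{C}')$ coincide, and then invoke the presentation~\eqref{pre.Aluffi} together with the identification $\widetilde{I(\mathcal{C}')}=(T_e:e\in\mathcal{C}')$. The difference is in the second step. The paper's proof is a single line: it simply cites \cite[Theorem~4.2]{AAAR}, where the presentation $\mathcal{R}_R(I(\mathcal{C}))\simeq R[T_e:e\in\mathcal{C}]/\mathcal{A}$ has already been established for complete $d$-partite $d$-uniform clutters. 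Everything you outline from ``Next, I would apply the Taylor-type presentation\ldots'' onward is therefore a reconstruction of that external result rather than part of the present paper's argument.

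Your sketch of that reconstruction is reasonable---the Segre-product description of the special fiber and the swap-syzygy description of the linear relations are exactly the right ingredients---but the piece you flag as the ``main obstacle'' (showing that the higher $P_s$ reduce to the quadratic swap relations, i.e.\ that $I(\mathcal{C})$ is of fiber type) is precisely the nontrivial content of \cite[Theorem~4.2]{AAAR}, and your inductive outline is not yet a proof. In particular, establishing fiber type for a monomial ideal is not automatic from the Taylor presentation, and the bookkeeping you allude to requires a genuine argument. So your proposal is a correct and more self-contained route, but it replaces a one-line citation with a substantial combinatorial lemma that the paper chose to outsource.
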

\begin{proof}
The assertion follows from Proposition~\ref{d-uni-d-part} and~\cite[Theorem 4.2]{AAAR}. 
\end{proof}
The above proposition can be applied to produce examples of a pair $J\subset I$ such that $t_0(J)>\mathrm{rt}(I/J)$. 
We close this paper by posing the following questions. 
\begin{question}
Find (if it is possible) a class of ideals $J\subseteq I$ such that $t_0(J)<\mathrm{AR}(J,I)$. 
\end{question}
\begin{question}
Is Theorem~\ref{vvmon} valid for the case that $J\subseteq I$ are homogeneous ideals in $\mathbb{K}[\mathbf{x}]$?
\end{question}

\begin{question}
Let $J$ be a square-free monomial ideal generated in degree $d$. Characterize when $\VaVa_{J\subseteq \fm^{[d]}}=\{0\}$? (c.f. Proposition~\ref{square-free pairs}). 
\end{question}
\begin{question}
Is the converse of Theorem~\ref{ttorsion} true? 
\end{question}

\section*{Acknowledgment}
The authors would like to thank Rashid Zaare-Nahandi for fruitful  discussion and comments on the subject of the paper. The authors would like to thank M. Farrokhi D.G. for proposing Example~\ref{biparti-subgrpah}, A. Taherkhani for suggesting the names almost $C_3$-embedded and almost $P_3$-embedded subgraphs and F. Planas-Vilanova for pointing out the inequality $\mathrm{AR}(J,I)\leq \mathrm{rt}(I/J)$.


\begin{thebibliography}{99}
\bibitem{Aluffi}
P. Aluffi, \textit{Shadows of blow-up algebras}, Tohoku Math. J., \textbf{56}, (2004) 593--619.
\bibitem{Barshay}
J. Barshay, \textit{Graded Algebras of Powers of Ideals Generated by A-Sequences}, J. Algebra, \textbf{25}, (1973)  90--99.

\bibitem{d-partite}
D. Bolognini, \textit{Recursive  Betti  numbers  for  Cohen-Macaulay $d$-partite clutters arising from posets}, J. Pure. Appl. Algebra, \textbf{220}(9), (2016) 3102--3118.

\bibitem{Fulton}
W. Fulton, \textit{Intersection Theory}, second ed., Springer-Verlag, Berlin, 1998.
\bibitem{Keel}
S. Keel, \textit{Intersection theory of linear embeddings}, Trans. Amer. Math. Soc., \textbf{335} (1), (1993) 195--212.

\bibitem{AAAR}
A. Nasrollah Nejad, A. Nikseresht, A. A. Yazdan Pour and  R. Zaare-Nahandi,
\textit{On the blowup of affine spaces along monomial ideals: Tameness}, Journal of Symbolic Computation, \textbf{93}, (2019) 85--99.

\bibitem{AZ}
A. Nasrollah Nejad and Z. Shahidi, \textit{The Valabrega-Valla module of the Jacobian ideal of points in a projective plane}, Comm. Algebra, \textbf{48}:5, (2020) 2110--2126.

\bibitem{AZ1}
A. Nasrollah Nejad, Z. Shahidi and R. Zaare-Nahandi,  \textit{Torsion-free Aluffi algebras}, J. Algebra, {\bf 513} (2018) 190--207.


\bibitem{AA}
A. Nasrollah Nejad and A. Simis, \textit{The Aluffi algebra}, J. Singularities, {\bf 3}, (2011) 20--47.

\bibitem{AR}
A. Nasrollah Nejad, R. Zaare Nahandi, \textit{Aluffi torsion-free ideal}, J. Algebra, {\bf 346}, (2011) 284--298.

\bibitem{FPV}
F. Planas-Vilanova, \textit{The strong uniform Artin-Rees property in codimension one}, J. Reine Angew. Math., \textbf{527}, (2000) 185--201.

\bibitem{taylor}
D. Taylor, \textit{Ideal generated by monomials in an $R$-sequence}, PhD thesis, University of Chicago, 1966. 

\bibitem{VaVa}
P. Valabrega and G. Valla, \textit{Form rings and regular sequences}, Nagoya Math. J., {\bf 72}, (1978) 91-101.

\bibitem{RV1}
R. H. Villarreal, \textit{Rees algebras of edge ideals}, Comm. Algebra \textbf{23}, (1995) 3513--3524.
\end{thebibliography}
\end{document}